\newtheorem{teo}{Theorem}[section]
\newtheorem{prop}[teo]{Proposition}
\newtheorem{lem}[teo]{Lemma}
\newtheorem{pro}[teo]{Problem}
\newtheorem{algo}[teo]{Algorithm}
\newtheorem{asume}[teo]{Assumption}
\newtheorem{notation}[teo]{Notation}
\newtheorem{rem}[teo]{Remark}
\newcommand{\N}{\mathbb N}
\newcommand{\R}{\mathbb R}
\renewcommand{\H}{\mathcal{H}}
\newcommand{\G}{\mathcal G}
\newcommand{\id}{\textnormal{Id}}
\newcommand{\weak}{\rightharpoonup}
\newcommand{\ran}{\textnormal{ran}\,}
\newcommand{\dom}{\textnormal{dom}\,}
\newcommand{\zer}{\textnormal{zer}}
\newcommand{\fix}{\textnormal{Fix}\,}
\newcommand{\gra}{\textnormal{gra}\,}
\newcommand{\argm}[1]{\underset{#1}{\argmin\, }}
\newcommand{\scal}[2]{{\left\langle{{#1}\mid{#2}}\right\rangle}}
\newcommand{\menge}[2]{\big\{{#1}~\big |~{#2}\big\}}
\newcommand{\RR}{\ensuremath{\mathbb{R}}}
\newcommand{\RP}{\ensuremath{\left[0,+\infty\right[}}
\newcommand{\RPP}{\ensuremath{\left]0,+\infty\right[}}
\newcommand{\RX}{\ensuremath{\left]-\infty,+\infty\right]}}
\newcommand{\prox}{\ensuremath{\text{\rm prox}\,}}
\newcommand{\weakly}{\ensuremath{\:\rightharpoonup\:}}
\newcommand{\minimize}[2]{\ensuremath{\underset{\substack{{#1}}}%
{\mathrm{minimize}}\;\;#2 }}
\numberwithin{equation}{section}
\numberwithin{equation}{section}
\DeclareSymbolFont{fouriersymbols}{FMS}{futm}{m}{n}
\DeclareSymbolFont{fourierlargesymbols}{FMX}{futm}{m}{n}
\DeclareMathDelimiter{\nr}{\mathord}{fouriersymbols}{152}{fourierlargesymbols}{147}
\DeclareMathOperator*{\argmin}{arg\,min}
\DeclareMathDelimiter{\nr}{\mathord}{fouriersymbols}{152}{fourierlargesymbols}{147}
\DeclareMathAlphabet{\mathpzc}{OT1}{pzc}{m}{it}
\title[Solution of Mismatched Monotone+Lipschitz Inclusion Problems]{Solution of Mismatched Monotone+Lipschitz Inclusion Problems}
\author{Emilie Chouzenoux$^{\dagger}$}
\author{Jean-Christophe Pesquet$^{\dagger}$}
\author{Fernando Rold\'an$^{\dagger}$}
\address{$^{\dagger}$Universit\'e Paris-Saclay, CentraleSup\'elec, CVN, Inria, Gif-sur-Yvette 91190, France.}
\email{emilie.chouzenoux@inria.fr}
\email{jean-christophe.pesquet@centralesupelec.fr}
\email{fernando.roldan-contreras@centralesupelec.fr}
\begin{document}

\begin{abstract}
In this article, we study the convergence of algorithms for solving 
monotone inclusions  in the presence of adjoint mismatch. The adjoint 
mismatch arises when the adjoint of a linear operator is replaced by 
an approximation, due to computational or physical issues. This 
occurs in inverse problems, particularly in computed tomography. 
In real Hilbert spaces, monotone inclusion problems 
involving a maximally $\rho$-monotone operator, a cocoercive 
operator, and a Lipschitzian operator
can be solved by the {\it Forward-Backward-Half-Forward} and the {\it 
	Forward-Douglas-Rachford-Forward}. We investigate 
the case of a mismatched Lipschitzian operator. We propose variants 
of the two aforementioned methods to cope with the mismatch, and 
establish conditions under which the weak convergence to a solution 
is guaranteed for these variants. The proposed algorithms hence 
enable each iteration to be implemented with a possibly 
iteration-dependent approximation to the mismatch operator, thus 
allowing this operator to be modified at each iteration. Finally, we 
present numerical experiments on a computed tomography example in 
material science, showing the applicability of our theoretical 
findings.
\par
\bigskip
\noindent \textbf{Keywords.} {\it Splitting algorithms, convergence analysis, fixed point theory, convex optimization, adjoint mismatch}
\par
\bigskip \noindent
2020 {\it Mathematics Subject Classification.} {47H05, 47H10, 65K05, 90C25.}
\end{abstract}

\maketitle
 
\section{Introduction}
A rich literature exists on monotone inclusion problems formulated on 
a Hilbert space $\H$
and their deep relations with optimization, game theory, and data 
science
(see 
\cite{bauschkebook2017,Combettes2018MP,CombettesPesquet2021strategies}
and the references therein). In particular, splitting approaches 
have turned out to play a crucial role for solving complex 
formulations combining monotone and linear operators.
A typical monotone inclusion problem involving the sum of several 
operators is the following one:
\begin{pro}\label{prob:problem0}
	Let $A\colon \H \to 2^\H$ be a maximally $\rho$-monotone operator 
	for 
	some $\rho\in \R$, let 
	$C \colon \H \to \H$ be a $\beta$-cocoercive operator for some 
	$\beta \in \RPP$, let $B :\G \to \G$ be a monotone and 
	$\zeta$-Lipschitzian 
	operator for some $\zeta\in \RPP$, let $L: \H \to \G$
	be a linear bounded operator, let $c\in \G$, and let $\alpha \in 
	\RP$. 
	We want
	to
	\begin{equation}\label{eq:problem0}
		\text{find} \quad x \in \H \quad \text{such that} \quad 0 \in 
		Ax+Cx+\alpha L^*(Lx-c)+L^*BLx,  
	\end{equation}
	under the assumption that the set of solutions 
	is nonempty.
\end{pro}
A particular case of this problem is the following optimization one:
\begin{pro}\label{prob:problem0o}
	Let $f\colon \H \to ]-\infty,+\infty[$ be a proper 
	lower-semicontinuous $\rho$-strongly (resp. $(-\rho)$-weakly) 
	convex  function for 
	some $\rho\in \RP$ (resp $\rho < 0)$, let 
	$g\colon \H \to \RR$ be a differentiable convex function
	with a $1/\beta$-Lischitzian gradient for some 
	$\beta \in \RPP$, let $h\colon \H \to \R$ be a differentiable 
	convex function with a $\zeta$-Lipschitzian 
	gradient for some $\zeta\in \RPP$, let $L: \H \to \G$
	be a linear bounded operator, let $c\in \G$, and let $\alpha \in 
	\RP$. 
	Let 
	\begin{equation}
		F\colon x \mapsto f(x)+g(x)+
		\alpha\frac{\|Lx-c\|^2}{2}
		+ h(Lx).
	\end{equation}
	We want
	to
	\begin{equation}\label{eq:problem0o}
		\minimize{x\in \H}{F(x)}
\end{equation}
under the assumption that the set of solutions 
is nonempty.
\end{pro}
Let $\partial_{\rm F} f$ denotes the Fr\'echet subdifferential of $f$.
The equivalence between Problem \ref{prob:problem0o} and Problem \ref{prob:problem0} is obtained
by setting $A = \partial_{\rm F} f$,
$C=\nabla g$, $B = \nabla h$, 
provided that every local minimizer of $F$
is a global minimizer. The latter condition is satisfied when $F$
is convex
(which obviously arises when $\rho \ge 0$).\\
Another example is the following Nash equilibrium problem 
\cite{BricenoCombettes2013} involving $2$ players:
\begin{pro}\label{pr:game}
Let $\H_1$, $\H_2$, $\G_1$, and $\G_2$ be real Hilbert spaces. For 
every $i\in \{1,2\}$, let 
$f_i\colon \H_i \to ]-\infty,+\infty]$
be a proper lower-semicontinuous $\rho_i$-strongly (resp. 
$(-\rho_i)$-weakly) convex function for  some $\rho_i\in \RP$ (resp 
$\rho_i < 0)$ and  let $g_i\colon \H_i \to \RR$ be a differentiable 
convex function
with a $1/\beta_i$-Lischitzian gradient for some 
$\beta_i \in \RPP$. 
Let $R$ be a bounded linear operator
from $\G_2$ to $\G_1$ and, for every
$i\in \{1,2\}$, let $L_i$ be a linear bounded
operators from $\H_i$ to $\G_i$. 
Let $\alpha \in \RPP$ and,
for every $i\in \{1,2\}$, let $Q_i$ be a self-adjoint linear operator 
from $\G_i$ to $\G_i$ 
such that 
$Q_i-\alpha \id_{\G_i}$ is positive.
Let
\begin{align}
	F_1&\colon (x_1,x_2) \mapsto 
	f_1(x_1)+g_1(x_1)
	+\scal{L_1 x_1}{\frac12 Q_1L_1 x_1
		+RL_2x_2}\\
	F_2&\colon (x_1,x_2) \mapsto
	f_2(x_2)+g_2(x_2)
	+\scal{L_2 x_2}{\frac12 Q_2L_2 x_2
		-R^*L_1x_1}.
\end{align}
We want to find $\overline{x}_1\in \H_1$
and $\overline{x}_2 \in \H_2$ such that
\begin{equation}
	\overline{x}_1 = \argm{x_1\in \H_1}
	{F_1(x_1,\overline{x}_2)}, \qquad
	\overline{x}_2 = \argm{x_2\in \H_2}
	{F_2(\overline{x}_1,x_2)},
\end{equation}
under the assumption that such a pair
$(\overline{x}_1,\overline{x}_2)$ exists.
\end{pro}
Assume that, for every solution 
$(\overline{x}_1,\overline{x}_2)$ to
Problem \ref{pr:game}, every local minimizer of $x_1\mapsto 
F_1(x_1,\overline{x}_2)$ (resp. $x_2\mapsto F_2(\overline{x}_1,x_2)$)
is a global minimizer. For example, this condition is satisfied if,
for every $i\in \{1,2\}$,
$x_i \mapsto f_i(x_i)+g_i(x_i)+\frac12 \scal{L_i x_i}{Q_i L_i x_i}$
is convex.
Then, the above game theory problem is an instance of Problem 
\ref{prob:problem0}, where
$\H = \H_1\times \H_2$, $\G= \G_1\times \G_2$,
$\rho = \min\{\rho_1,\rho_2\}$,
$A= A_1\times A_2$,
$(\forall i\in \{1,2\})$ $A_i = \partial_{\rm F} f_i$,
$C\colon (x_1,x_2) \mapsto (\nabla g_1(x_1),\nabla g_2(x_2))$,
$B\colon (y_1,y_2) \mapsto 
(Q_1 y_1+S y_2-\alpha y_1,-S^* y_1+Q_2 y_2-\alpha y_2)$,
$L\colon (x_1,x_2) \mapsto (L_1 x_1,L_2 x_2)$, and $c=0$.\\

In this article, we will be interested in solving the following relaxation of 
Problem~\ref{prob:problem0}:
\begin{pro}\label{prob:problem1}

Let $A: \H \to 2^\H$ be a maximally $\rho$-monotone operator for 
some $\rho\in \R$, let 
$C : \H \to \H$ be a $\beta$-cocoercive operator for some 
$\beta \in \RPP$, let $B :\G \to \G$ be a monotone and 
$\zeta$-Lipschitzian 
operator for some $\zeta\in \RPP$, let $L: \H \to \G$ and $K: \G 
\to\H$ 
be linear bounded operators, let $c\in \G$, and let $\alpha \in \RP$. 
We want
to 
\begin{equation}\label{eq:problem1}
	\text{find} \quad x \in \H \quad \text{such that} \quad 0 \in 
	Ax+Cx+\alpha K(Lx-c)+KBLx,
\end{equation}
under the assumption that the set of solutions 
is nonempty.
\end{pro}
This formulation arises when $L^*$ in Problem \ref{prob:problem0}
is replaced by some approximation $K$, introducing a so-called 
adjoint mismatch.
Such a mismatch is typically encountered in variational approaches 
for solving inverse problems, where $L$ models a degradation process 
and its adjoint often needs to be approximated due to computational 
or physical issues. Adjoint mismatch problems have been 
the topic of a number of recent works where simpler scenarios than 
Problem \ref{prob:problem1} have been considered. 
The importance of adjoint mismatch in computer tomography has been 
early recognized in \cite{ZengGullberg2000}. Then, various methods 
for solving mismatched forms of Problem \ref{prob:problem0o} have 
been investigated in the literature. 
The analysis of the quadratic case  ($f = g = h = 0$) in 
\cite{ElfvingHansen2018,DongHansen2019}  is grounded on algebraic 
tools.
In the context of the randomized Kaczmarz method, affine 
admissibility problems, i.e., $f = g = 0$, $\alpha = 0$, and $h$ is 
the indicator function of a singleton, have been addressed in 
\cite{LorenzRose2018}.
The case when $\rho \ge 0$, $g= \frac{1}{2} \|\cdot\|^2$, and $h = 0$ 
is investigated in \cite{ChouzenouxMismatch2021} by focusing on the 
proximal gradient algorithm. As an extension of 
\cite{ChouzenouxMismatch2021}, a new preconditioning strategy for the 
proximal gradient algorithm is proposed in \cite{Savanier2022}.
The case when $\rho > 0$, $g = 0$, $\alpha = 0$, and the conjugate 
of $h$ is strongly convex is analyzed in \cite{LorenzMMCP2023} by 
using Chambolle-Pock algorithm with fixed and varying step sizes. A 
similar scenario where $\rho \ge 0$, $h = 0$, and $g = \ell \circ M$, 
where $\ell$ is a convex function  and $M$ is a bounded linear 
operator, has been studied in \cite{ChouzenouxMismatch2023} by 
considering the Condat-V\~u \cite{Condat13,Vu13},  Loris-Verhoeven 
\cite{LorisVerhoeven2011}, and Combettes-Pesquet 
\cite{CombettesPesquet2011PD} primal-dual methods. Note that the 
convergence proofs in 
\cite{ChouzenouxMismatch2023,ChouzenouxMismatch2021,Savanier2022}
rely on cocoercivity properties of the underlying operators, while 
this paper puts emphasis on weaker Lipschitz properties.

Since the operator $L^*(\alpha \id_\mathcal{G} + B)L$ is monotone and 
Lipschitzian, the methods proposed in 
\cite{BricenoDavis2018,Malitsky2020SIAMJO,RyuVu2020} can be used to 
solve Problem~\ref{prob:problem0}. In particular, the authors in 
\cite{BricenoDavis2018} proposed a method called {\it 
forward‑backward-half-forward} (FBHF), which  generalizes the {\it 
forward-backward} (FB) splitting
\cite{Goldstein1964,Lions1979SIAM,passty1979JMAA} and the 
{\it forward-backward-forward} (FBF), also called {\it Tseng's 
splitting} \cite{Tseng2000SIAM}. FBHF involves two activations of the 
Lipschitzian operator, one activation of $C$, and one application of 
the resolvent of $A$  (up to some scale factor), at each iteration. 
On the other hand, the \emph{forward-Douglas--Rachford-forward} (FDRF) splitting proposed in 
\cite{RyuVu2020} involves two activations of the Lipschitzian 
operator and one computation of the resolvent of $A$ and of the 
resolvent of $B$, at each iteration. FDRF reduces to the 
Douglas--Rachford splitting \cite{Eckstein1992,Lions1979SIAM} when 
$C=0$ and it reduces to FBF when the Lipschitzian operator is absent.
When dealing with Problem~\ref{prob:problem1},
the monotonicity of the operator $K(\alpha \id_G + B)L$ is not 
guaranteed, and the existing convergence guarantees for the 
previously mentioned methods collapse.

In our work, we revisit FBHF and FDRF
by proposing variants allowing to tackle 
Problem~\ref{prob:problem1} and 
by studying conditions guaranteeing their convergence in this 
context. Additionally, our analysis will be carried out in the case 
when, at each iteration, $K$ itself is not available, but only an 
approximation $K_n$ of it is.
Our results are therefore of potential interest in scenarios where 
$K_n$ corresponds to a learned operator, for example in neural 
network 
architectures based on the unrolling of optimization algorithms 
\cite{Corbineau2020,Prato2021,Savanier2023}.
In our analysis, 
we will also provide evaluations of the error incurred by the adjoint 
mismatch.

The outline of the paper is as follows.
In Section \ref{se:notation}, we briefly introduce the necessary 
notation and mathematical background. Section \ref{se:preliminary} 
provides preliminary results concerning Problem
\ref{prob:problem1}. We also establish two lemmas which will be 
useful to prove convergence results for the considered algorithms. 
Sections \ref{se:FBFH} and
\ref{se:FDRF} are dedicated
to the convergence analysis of splitting methods for solving Problem \ref{prob:problem1} based on FBHF and 
FDRF, respectively. In Section \ref{se:numexp},
we present a numerical comparison of the two algorithms in the resolution of an 
image recovery problem arising in computer tomography.
Some concluding remarks are drawn in 
Section \ref{se:conclu}.

\section{Notation and Background}\label{se:notation}
Throughout this paper $\H$ and $\G$ are real Hilbert spaces with 
scalar 
product $\scal{\cdot}{\cdot}$ and associated norm $\|\cdot 
\|$. The symbols $\weakly$ and $\to$ denote the weak and strong 
convergence, respectively. The identity operator on $\H$ is denoted 
by $\id_\H$. We denote the set of bounded linear operator from $\H$ 
to $\G$ by $\mathcal{B}(\H,\G)$. Given a linear operator $M \in 
\mathcal{B}(\H,\G)$ we denote its adjoint 
by $M^* \in \mathcal{B}(\G,\H)$.
Let $D\subset \H$ be non-empty set and let $T\colon D \rightarrow 
\H$. 
The set of fixed points of $T$ is $\fix T = \menge{x \in D}{x=Tx}$.
Let $\beta \in \left]0,+\infty\right[$. The operator $T$ is 
$\beta-$cocoercive if 
\begin{equation} \label{def:coco}
(\forall x \in D) (\forall y \in D)\quad \langle x-y \mid Tx-Ty 
\rangle 
\geq \beta \|Tx - Ty 
\|^2
\end{equation}
and it is $\beta-$Lipschitzian if 
\begin{equation} \label{def:lips}
(\forall x \in D) (\forall y \in D)\quad \|Tx-Ty\| \leq  \beta\|x 
- y 
\|.
\end{equation}	
When the above inequality holds, the smallest constant $\beta\in 
[0,+\infty[$ allowing it to be satisfied is called the Lipschitz 
constant of $T$ and denoted 
by $\operatorname{Lip}T$.
Let $A\colon\H \rightarrow 2^{\H}$ be a set-valued operator.
The domain, range, zeros, and graph of $A$ 
are 
$\dom\, A = \menge{x \in \H}{Ax \neq  \varnothing}$,
$\ran\, A = \menge{u \in 
\H}{(\exists x \in \H)\,\, u \in Ax}$,  $\zer A = 
\menge{x \in \H}{0 \in Ax}$,
and $\gra A = \menge{(x,u) \in \H \times \H}{u \in Ax}$, 
respectively. Moreover, the inverse of $A$ is given by
$A^{-1} \colon u \mapsto \menge{x \in \H}{u \in Ax}$.
Let $\rho \in \R$, the operator $A$  is $\rho$-monotone if, for every
$(x,u)\in \gra A$ and $(y,v)\in \gra A$ we have
\begin{equation}\label{eq:defrhomon}
\scal{x-y}{u-v} 
\geq \rho \|x-y\|^2.
\end{equation}
Additionally, $A$ is maximally $\rho$-monotone if it is 
$\rho$-monotone and its graph is 
maximal in the sense of 
inclusions among the graphs of $\rho$-monotone operators. In the case 
when $\rho=0$, $A$ is (maximally) monotone, and when $\rho >0$ $A$ is 
strongly (maximally) monotone.
The resolvent of a maximally $\rho$-monotone operator $A$ is defined 
by
$J_A:=(\id+A)^{-1}$ and, if $\rho>-1$, $J_A$ is single valued 
and $(1+\rho)$-cocoercive \cite[Table~1]{BauschkeMoursiXianfu2021}. 
Note that, if $A$ is $\rho$-monotone, then, for every $\gamma \in 
\RPP$, $\gamma A$ is $\gamma\rho$-monotone.

We denote by $\Gamma_0(\H)$ the class of proper lower 
semicontinuous convex functions $f\colon\H\to\RX$. Let 
$f\in\Gamma_0(\H)$.
The Fenchel conjugate of $f$ is 
defined by $f^*\colon u\mapsto \sup_{x\in\H}(\scal{x}{u}-f(x))$ and 
we have
$f^*\in \Gamma_0(\H)$. The Fr\'echet subdifferential of $f$ is the 
maximally monotone operator
$$\partial_F f\colon x\mapsto \menge{u\in\H}{(\forall y\in\H)\:\: 
f(x)+\scal{y-x}{u}\le f(y)},$$
we have that
$(\partial_F f)^{-1}=\partial_F f^*$ 
and that $\zer\,\partial_F f$ is the set of 
minimizers of $f$, which is denoted by $\arg\min_{x\in \H}f$. 

For further properties of monotone operators,
nonexpansive mappings, and convex analysis, the 
reader is referred to \cite{bauschkebook2017}.

\section{Preliminary results}\label{se:preliminary}
By simple calculation, we can show that, for every $M \in 
\mathcal{B}(\G,\H)$,
the operator 
$M(\alpha 
\id_{\G} + B)L$ is 
Lipschitzian. Indeed, for every $(x,y) \in \H^2$,
\begin{equation}
\|M(\alpha 
\id_{\G} + B) L x - M(\alpha 
\id_{\G} + B) L y\| \leq (\alpha \|M\circ L\| + \zeta\|M\|\|L\| ) 
\|x-y\|.
\end{equation}
This applies, in particular, to
$MBL$.
Henceforth, we introduce the following notation.
\begin{notation}
In the context of Problem~\ref{prob:problem1}, 
for every $M \in \mathcal{B}(\G,\H)$, define
$D_M\colon x \mapsto 
\alpha M(Lx-c)+MBLx$, 
$\kappa_M = \operatorname{Lip}(M(\alpha \id_{\G}+B)L)$, and
$\widetilde{\zeta}_{M} = \operatorname{Lip}
(MBL)$.
Let $\lambda_{\min} \in \R$ be defined 
by
\begin{equation}\label{eq:deflmin}
	\lambda_{\min} = \inf \menge{\scal{x}{KLx}}{ x \in \H,\  \|x\| 
		=1}.
\end{equation}
\end{notation}
In order to guarantee the convergence of methods for solving 
Problem~\ref{prob:problem1}, we introduce the following assumptions: 
\begin{asume}\label{assume:1}
In the context of Problem~\ref{prob:problem1}, suppose that
\begin{enumerate}
	\item \label{eq:neqassume1} $D_K\neq 0$,
	\item \label{eq:desassume1} 
	$\hat{\rho}=\rho+\alpha\lambda_{\min}-\widetilde{\zeta}_{L^*-K} 
	\geq 0$,
	\item\label{eq:aproxassume1} $(K_n)_{n\in\N}$ is a sequence of  
	$\mathcal{B}(\G,\H)$ such that, for every $n \in \N$, 
	$\|K_n-K\|\leq \omega_n$,
	where $\{\omega_n\}_{n\in \N}\subset \RP$ and $\sum_{n \in 
		\N} 
	\omega_n < +\infty$.
\end{enumerate} 
\end{asume}	
\begin{rem} In the case when $\alpha > 0$, $B=0$, and $A$ is 
maximally 
monotone ($\rho =0$), 
Assumption~\ref{assume:1} reduces to 
the monotonicity of $KL$, that is $\lambda_{\min}\geq 0$, 
which is a necessary condition for $KL$ to be cocoercive 
\cite[Lemma~3.3]{ChouzenouxMismatch2021}, thus, for ensuring the 
convergence of cocoercive linear mismatch methods proposed 
in  
\cite{ChouzenouxMismatch2023,ChouzenouxMismatch2021}. In 
general, monotone linear operators are not
necessarily cocoercive, for instance, consider the
operator $M \colon \R^2 \to \R^2 \colon (x,y) \mapsto (-y,x)$.
\end{rem}
\begin{prop} \label{prop:MP}
In the context of Problem~\ref{prob:problem1} and  
Assumption~\ref{assume:1}, the following assertions hold:
\begin{enumerate}
	\item \label{prop:MP1} $A+D_K$ is maximally monotone.
	\item \label{prop:MP2} $A+C+D_K$ is maximally monotone.
	\item \label{prop:MP3}  
	Suppose that $\hat{\rho}>0$. Then
	$A+D_K$ is $\hat{\rho}$-strongly monotone and 
	$\zer(A+C+D_K)$ is a singleton.
	
\end{enumerate}
\end{prop}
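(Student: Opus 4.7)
The plan is to exploit the ``mismatch identity'' $K = L^* - (L^*-K)$ in order to obtain a sharp monotonicity estimate for the single-valued operator $D_K$, from which all three assertions follow by standard tools on maximal monotone sums. Concretely, I would first establish that, for every $(x,y) \in \H^2$,
\[
\scal{x-y}{D_K x - D_K y} \geq (\alpha \lambda_{\min} - \widetilde{\zeta}_{L^*-K}) \|x-y\|^2 = (\hat{\rho}-\rho)\|x-y\|^2.
\]
The constant $-\alpha K c$ cancels in the difference, so the left-hand side equals $\alpha \scal{x-y}{KL(x-y)} + \scal{x-y}{K(BLx-BLy)}$. The first summand is bounded below by $\alpha\lambda_{\min}\|x-y\|^2$ by the definition of $\lambda_{\min}$. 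For the second, I would decompose $K = L^* - (L^*-K)$ and write
\[
\scal{x-y}{K(BLx-BLy)} = \scal{L(x-y)}{BLx-BLy} - \scal{x-y}{(L^*-K)(BLx-BLy)};
\]
the first term is nonnegative by monotonicity of $B$, while the modulus of the second is bounded by $\widetilde{\zeta}_{L^*-K}\|x-y\|^2$ via Cauchy--Schwarz together with the definition $\widetilde{\zeta}_{L^*-K}=\operatorname{Lip}((L^*-K)BL)$. In particular, under Assumption~\ref{assume:1}\ref{eq:desassume1}, $D_K+\rho\id$ is $\hat{\rho}$-monotone, hence monotone.

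For assertion \ref{prop:MP1}, I would write $A + D_K = (A-\rho\id) + (D_K+\rho\id)$. By the very definition of maximal $\rho$-monotonicity, $A-\rho\id$ is maximally monotone. As already observed in the excerpt, $D_K = K(\alpha\id_\G+B)L\,(\cdot) - \alpha Kc$ is Lipschitzian with constant $\kappa_K$, so $D_K+\rho\id$ is monotone, everywhere defined, and continuous, hence maximally monotone by \cite[Cor.~20.28]{bauschkebook2017}. Since one summand has full domain, the classical sum theorem \cite[Cor.~25.5]{bauschkebook2017} yields maximal monotonicity of $A+D_K$.

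Assertion \ref{prop:MP2} then follows by one more application of the same sum theorem: $C$ is $\beta$-cocoercive, so monotone, Lipschitzian, and defined on all of $\H$, hence maximally monotone, and $A+C+D_K=(A+D_K)+C$ inherits maximal monotonicity. For assertion \ref{prop:MP3}, when $\hat{\rho}>0$ the estimate above upgrades to $\hat{\rho}$-strong monotonicity of $A+D_K$, and adding the monotone operator $C$ preserves it. Strong monotonicity immediately gives uniqueness of a zero: two zeros $x_1,x_2$ of $A+C+D_K$ would satisfy $0 \geq \hat{\rho}\|x_1-x_2\|^2$. Existence is the standing nonemptiness hypothesis of Problem~\ref{prob:problem1}, which reads $\zer(A+C+D_K)\neq\varnothing$.

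The only nontrivial step is the monotonicity estimate for $D_K$; the decomposition $K = L^* - (L^*-K)$ is what transfers monotonicity from $L^*BL$ (which is monotone) to $KBL$ at the cost of the penalty $\widetilde{\zeta}_{L^*-K}$. Once this is in hand, parts \ref{prop:MP1}--\ref{prop:MP3} are routine consequences of Rockafellar's sum theorem and strict monotonicity.
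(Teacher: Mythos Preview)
Your proof is correct and shares the paper's core device: the monotonicity estimate for $D_K$ via the splitting $K = L^* - (L^*-K)$ is identical in substance to the paper's inequality \eqref{eq:strongmonotonicity}. Where you diverge is in the maximality argument for \ref{prop:MP1}. The paper decomposes $A+D_K$ into five pieces,
\[
(A-\rho\id_\H) + \hat{\rho}\id_\H + (\alpha KL - \alpha\lambda_{\min}\id_\H) + \big((K-L^*)BL + \widetilde{\zeta}_{L^*-K}\id_\H\big) + L^*BL - \alpha Kc,
\]
verifies maximal monotonicity of each summand separately (invoking \cite[Lemma~2.8]{BauschkeMoursiXianfu2021}, \cite[Example~20.34]{bauschkebook2017}, \cite[Lemma~2.12]{MoursiGiselsson2021}, and \cite[Corollary~25.6]{bauschkebook2017}), and then applies the sum theorem. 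You instead write simply $A+D_K = (A-\rho\id) + (D_K+\rho\id)$ and observe that the second summand is monotone, continuous, and everywhere defined, hence maximally monotone; one application of \cite[Corollary~25.5]{bauschkebook2017} then suffices. Your two-term decomposition is more economical and avoids several auxiliary citations. For \ref{prop:MP3} there is a minor difference: the paper appeals to \cite[Corollary~23.37]{bauschkebook2017}, which delivers existence and uniqueness of the zero simultaneously from maximal plus strong monotonicity, whereas you argue uniqueness directly and invoke the standing nonemptiness hypothesis of Problem~\ref{prob:problem1} for existence; both routes are valid.
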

\begin{proof}\ 
\begin{enumerate}
	\item  In view 
	of the $\rho$-monotonicity of $A$,
	the definition of $\lambda_{\min}$ in \eqref{eq:deflmin}, 
	the 
	Lipschitzianity of $(L^*-K)BL$, the monotonicity of $L^*BL$ 
	(\cite[Proposition~20.10]{bauschkebook2017}), and 
	Assumption~\ref{assume:1}.\ref{eq:desassume1}, we 
	have,
	for 
	every 
	$\big((x,u),(y,v)\big) \in (\gra A)^2$, 
	\begin{align}
		&\scal{x-y}{u+D_Kx-(v+D_Ky)}\nonumber\\
		&= 
		\scal{x-y}{u-v}+\alpha\scal{x-y}{KLx-KLy} \nonumber\\
		&\quad+\scal{x-y}{L^*BLx-L^*BLy}+\scal{x-y}{(L^*-K)(BLx-BLy)}\nonumber\\
		& \geq  \scal{x-y}{u-v}+\alpha\lambda_{\min}\|x-y\|^2 
		-\widetilde{\zeta}_{L^*-K}\|x-y\|^2\nonumber\\
		& \geq 
		\hat{\rho}\|x-y\|^2 \geq 0,
		\label{eq:strongmonotonicity}
	\end{align}
	which shows the monotonicity of $A+D_K$. Now, by \cite[Lemma 
	2.8]{BauschkeMoursiXianfu2021}, $A-\rho \id_{\H}$ is maximally 
	monotone, by 
	Assumption~\ref{assume:1}.\ref{eq:desassume1} and 
	\cite[Example 
	20.34]{bauschkebook2017}, 
	$\hat{\rho}\id_{\H}$ is 
	maximally 
	monotone, by \eqref{eq:deflmin} and 
	\cite[Example 20.34]{bauschkebook2017} 
	$\alpha(KL-\lambda_{\min}\id_{\H}) 
	$ is 
	maximally 
	monotone, by \cite[Lemma 
	2.12]{MoursiGiselsson2021} 
	$(K-L^*)BL+\widetilde{\zeta}_{L^*-K}\id_{\H}$ 
	is 
	$(1/(2\widetilde{\zeta}_{L^*-K}))$-cocoercive 
	with 
	full domain, and, by \cite[Corollary~25.6]{bauschkebook2017} and 
	the full domain of $B$, $L^*BL$ is maximally monotone. Since
	\begin{align}\label{eq:AB2MM} A+D_K = &(A-\rho\id_{\H}) +
		\hat{\rho} \id_{\H}
		+(\alpha KL - 
		\alpha 
		\lambda_{\min}\id_{\H})\nonumber\\
		&\;+ 
		((K-L^*)BL+\widetilde{\zeta}_{L^*-K}\id_{\H})+L^*BL-\alpha Kc,
	\end{align} 
	the maximality of $A+D_K$ follows from 
	\cite[Corollary~25.5]{bauschkebook2017}.
	\item By \ref{prop:MP1}, $A+D_K$ is maximally 
	monotone. Since	$C$ is cocoercive with full domain, the 
	operator $A+D_K+C$ is maximally monotone according to 
	\cite[Corollary 
	25.5]{bauschkebook2017}. 
	\item The strong monotonicity of $A+D_K$ follows directly 
	from \eqref{eq:strongmonotonicity}. In view of 
	the cocoercivity of $C$ and 
	\cite[Corollary~23.37]{bauschkebook2017}, we conclude 
	that $\zer(A+C+D_K)$ is a singleton.
	
\end{enumerate}
\end{proof}
\begin{rem}	   
\item  Proposition~\ref{prop:MP}.\ref{prop:MP1} remains valid 
if  Assumption~\ref{assume:1}.\ref{eq:desassume1} is 
replaced by
\begin{equation}
	\rho+\alpha\lambda_{\min}-\widetilde{\zeta}_K \geq 0.
\end{equation}
Indeed, for every $\big((x,u),(y,v)\big) \in (\gra A)^2$, 
\begin{align*}
	\scal{x-y}{u+D_Kx-(v+D_Ky)} 
	& \geq (\rho+\alpha\lambda_{\min}-\widetilde{\zeta}_K
	)\|x-y\|^2,
\end{align*}
which shows the monotonicity of $A+D_K$. The maximal monotonicity is 
deduced in the same way as in the end of the proof of 
Proposition~\ref{prop:MP}.\ref{prop:MP1}. However, since 
$K$ is 
a surrogate for operator $L^*$, it is expected that 
$\widetilde{\zeta}_K\geq \widetilde{\zeta}_{L^*-K}$. 
\end{rem}
The following proposition provides an estimate of the distance 
between a solution to Problem~\ref{prob:problem0} and a solution to 
Problem~\ref{prob:problem1}.

\begin{prop}\label{prop:solutions}
In the context of Problem~\ref{prob:problem1}, assume that 	
$\rho+\alpha \lambda_{\min}>0$.
Then, there exists a unique solution
$z^*$ to Problem \ref{prob:problem0}.
Furthermore, every solution $z$ to Problem~\ref{prob:problem1} is 
such that
\begin{equation}
	\|z-z^*\| \leq \frac{1}{\rho+\alpha \lambda_{\min}}  
	\|L^*-K\|\,
	\|\alpha (L z-c) +BLz\|.
\end{equation} 
\end{prop}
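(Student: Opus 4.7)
The plan is to split the argument into two parts: (i) existence and uniqueness of the solution $z^{*}$ to Problem \ref{prob:problem0}, and (ii) the quantitative estimate for $\|z-z^{*}\|$.

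For (i), I would invoke Proposition \ref{prop:MP}(iii) applied in the special case $K=L^{*}$ that corresponds to Problem \ref{prob:problem0}. In this specialization $\widetilde{\zeta}_{L^{*}-K}=0$ and the quantity $\lambda_{\min}$ of the Notation becomes $\inf\{\|Lx\|^{2}:\|x\|=1\}$, so the constant $\hat{\rho}$ of Assumption \ref{assume:1}(ii) coincides with $\rho+\alpha\lambda_{\min}$, which is strictly positive by hypothesis. Proposition \ref{prop:MP}(iii) then guarantees that $\zer(A+C+D_{L^{*}})$ is a singleton, which, combined with the standing non-emptiness assumption for Problem \ref{prob:problem0}, yields existence and uniqueness of $z^{*}$.

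For (ii), I would pick $u\in Az$ realizing the Problem \ref{prob:problem1} inclusion at $z$, so that $u=-Cz-\alpha K(Lz-c)-KBLz$, and similarly $u^{*}\in Az^{*}$ for Problem \ref{prob:problem0} at $z^{*}$. The key algebraic step is to write $K=L^{*}+(K-L^{*})$ inside the mismatched terms evaluated at $z$; after subtracting the two inclusions this produces the identity
\begin{equation*}
(L^{*}-K)\bigl(\alpha(Lz-c)+BLz\bigr)=(u-u^{*})+(Cz-Cz^{*})+\alpha L^{*}L(z-z^{*})+L^{*}(BLz-BLz^{*}).
\end{equation*}
I would then take the inner product of both sides with $z-z^{*}$. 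Four ingredients bound the right-hand side from below: $\rho$-monotonicity of $A$, monotonicity of $C$ (inherited from cocoercivity), monotonicity of $B$ giving $\scal{L(z-z^{*})}{BLz-BLz^{*}}\geq 0$, and the identity $\scal{z-z^{*}}{L^{*}L(z-z^{*})}=\|L(z-z^{*})\|^{2}\geq\lambda_{\min}\|z-z^{*}\|^{2}$. Summing these yields the lower bound $(\rho+\alpha\lambda_{\min})\|z-z^{*}\|^{2}$. The left-hand side is majorized by Cauchy--Schwarz and submultiplicativity of the operator norm by $\|z-z^{*}\|\,\|L^{*}-K\|\,\|\alpha(Lz-c)+BLz\|$. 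Dividing by $\|z-z^{*}\|$ (the case $z=z^{*}$ being trivial) delivers the announced inequality.

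The main technical point to watch is the placement of the auxiliary terms in the algebraic identity: everything must be arranged so that the mismatch factor $(K-L^{*})$ multiplies $\alpha(Lz-c)+BLz$ evaluated at $z$ and not at $z^{*}$, so that the bound matches the claim and so that the residual summands on the right-hand side are precisely those on which the three monotonicities and the $L^{*}L$ lower bound can be cleanly applied. Once this decomposition is fixed, the remaining monotonicity estimates and the Cauchy--Schwarz step are routine.
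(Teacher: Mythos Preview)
Your approach for part (i) mirrors the paper's: both invoke Proposition~\ref{prop:MP}(iii) specialized to $K=L^*$. For part (ii) you take a genuinely different route. The paper rewrites $z=J_{A+C+D_{L^*}}(z+D_{L^*-K}z)$ and $z^*=J_{A+C+D_{L^*}}(z^*)$, then uses that the resolvent $J_{A+C+D_{L^*}}$ is $(1+\rho+\alpha\lambda_{\min})^{-1}$-Lipschitzian (via strong monotonicity and \cite[Proposition~23.13]{bauschkebook2017}) and rearranges the resulting inequality. Your direct argument---subtracting the two inclusions, taking the inner product with $z-z^*$, and invoking separately the $\rho$-monotonicity of $A$, the monotonicity of $C$, and the monotonicity of $L^*BL$---is more elementary and avoids resolvents entirely; both strategies lead to the same bound.

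There is, however, a subtlety you should make explicit. The quantity $\lambda_{\min}$ in the statement is $\inf\{\scal{x}{KLx}:\|x\|=1\}$, defined with the mismatched $K$. Your step $\scal{z-z^*}{L^*L(z-z^*)}=\|L(z-z^*)\|^2\ge\lambda_{\min}\|z-z^*\|^2$ instead requires the infimum of $\scal{x}{L^*Lx}=\|Lx\|^2$, and these two infima need not coincide (take $L=\id_\H$, $K=2\,\id_\H$, giving $\lambda_{\min}=2$ but $\inf_{\|x\|=1}\|Lx\|^2=1$). The paper's proof has exactly the same gap when it applies Proposition~\ref{prop:MP}(iii) ``when $K=L^*$'' yet keeps the constant $\rho+\alpha\lambda_{\min}$. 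Both arguments are clean whenever $\lambda_{\min}\le\inf_{\|x\|=1}\|Lx\|^2$ (in particular whenever $\lambda_{\min}\le 0$, since $\|Lx\|^2\ge 0$), but in general the constant actually delivered is $\rho+\alpha\inf_{\|x\|=1}\|Lx\|^2$ rather than $\rho+\alpha\lambda_{\min}$.
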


\begin{proof}
Since $\rho+\alpha \lambda_{\min}>0$, it follows from 
Proposition~\ref{prop:MP}.\ref{prop:MP3}
when $K=L^*$ and the cocoercivity of $C$ that $A+C+D_{L^*}$ is 
$(\rho+\alpha \lambda_{\min})$-strongly monotone and 
$\zer(A+C+D_{L^*})$ is a singleton $\{z^*\}$. Let $z \in 
\zer(A+C+D_K)$.
Then
\begin{equation}
	z=J_{A+C+D_{L^*}}(z+D_{L^*-K}z)\quad  \textnormal{ and } \quad z^*=J_{A+C+D_{L^*}}(z^*).
\end{equation}
Since $J_{A+C+D_{L^*}}$ is 
Lipschitzian with constant $1/(1+\rho+\alpha \lambda_{\min})$
\cite[Proposition 23.13]{bauschkebook2017}, we deduce that
\begin{align}
	\|z-z^*\| &\leq 
	\frac{1}{1+\rho+\alpha 
		\lambda_{\min}}\|z+\alpha(L^*-K)(Lz-c)+(L^*-K)BLz-z^*\|\nonumber\\
	&\leq \frac{1}{1+\rho+\alpha 
		\lambda_{\min}}\big(\|z-z^*\|+\|L^*-K\|\, \|\alpha (Lz-c)+BLz\|).
\end{align}
The result follows from the last inequality.	
	%
	%
	%
\end{proof}	
The following lemmas will play a prominent role to prove convergence 
properties
of our proposed methods for solving 
Problem~\ref{prob:problem1}.
\begin{lem}\label{lemma:QQn}
Let $I\subset \RPP$ and let $S$ be a nonempty subset of $\H$. Suppose 
that, for every $\gamma \in I$, 
$Q^{\gamma}:\H \to \H$ is such that there exists a function 
$\phi^{\gamma}\colon \H \to 
\RP$ satisfying, for every $z \in \H$ 
and $z^* \in S$,
\begin{equation}\label{eq:lemQQn1}
	\|Q^{\gamma}z-z^*\|^2\leq \|z-z^*\|^2-\phi^{\gamma}(z).
\end{equation}
For every $z^* \in S$,
let $\{\varpi_n(z^*)\}_{n\in \N}\subset \RP$ and 
$\{\eta_n(z^*)\}_{n\in \N}\subset \RP$
be such that $\sum_{n \in 
	\N} 
\varpi_n(z^*)  < +\infty$
and $\sum_{n \in 
	\N} 
\eta_n(z^*) < +\infty$. 
For 
every $n \in \N$ and $\gamma \in I$, let $Q_n^{\gamma}:\H \to 
\H$ be such 
that
\begin{equation}\label{eq:lem2}
	(\forall z \in \H)\quad 
	\| Q_n^{\gamma} z - Q^{\gamma}
	z\| \leq  
	\varpi_n(z^*)  \| z 
	-z^*  \|
	+ \eta_n(z^*) .
\end{equation} 
Let $\{\gamma_n\}_{n \in \N} \subset I$, let 
$z_0 
\in 
\H$, and define the sequence $(z_n)_{n 
	\in \N}$ recursively by 
\begin{equation}
	\label{eq:algolemma}
	(\forall n \in \N) \quad
	\begin{aligned}
		&z_{n+1} =  Q_n^{\gamma_n} z_n.
	\end{aligned}
\end{equation} 
Then, the following assertions 
hold:
\begin{enumerate}
	\item\label{lemma:QQn1} $(\|z_n-z^*\|)_{n \in \N}$ is 
	convergent.
	\item\label{lemma:QQn2}  $\sum_{n \in \N} 
	\|z_{n+1}-Q^{\gamma_n}z_n\| < 
	+\infty$.
	\item\label{lemma:QQn3}  
	$\sum_{n\in \N} \phi^{\gamma_n} (z_n) <+\infty$.  
	\item\label{lemma:QQn4}  Suppose that every weak sequential 
	cluster point of 
	$(z_n)_{n 
		\in \N}$ belongs to
	$S$. Then $(z_n)_{n 
		\in \N}$ converges weakly to a point in $S$. 
\end{enumerate}
\end{lem}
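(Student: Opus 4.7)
The plan is to reduce everything to a quasi-Fejér monotonicity argument, combining the exact-iterate contraction \eqref{eq:lemQQn1} with the perturbation bound \eqref{eq:lem2} and invoking a Robbins--Siegmund / Polyak-style deterministic lemma. Fix $z^* \in S$ throughout. Writing $z_{n+1}-z^*=(Q_n^{\gamma_n}z_n-Q^{\gamma_n}z_n)+(Q^{\gamma_n}z_n-z^*)$ and using \eqref{eq:lemQQn1} (which in particular yields $\|Q^{\gamma_n}z_n-z^*\|\leq\|z_n-z^*\|$) together with \eqref{eq:lem2}, one gets
\begin{equation*}
\|z_{n+1}-z^*\|\leq (1+\varpi_n(z^*))\|z_n-z^*\|+\eta_n(z^*).
\end{equation*}
Since $\sum_n \varpi_n(z^*)<+\infty$ and $\sum_n\eta_n(z^*)<+\infty$, a classical one-line induction gives $\|z_n-z^*\|\leq \prod_{k}(1+\varpi_k(z^*))\|z_0-z^*\|+\prod_k(1+\varpi_k(z^*))\sum_k\eta_k(z^*)$, so $(z_n)_{n\in\N}$ is bounded; set $M(z^*)=\sup_n\|z_n-z^*\|<+\infty$.

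Next I would square: let $a_n=\|Q^{\gamma_n}z_n-z^*\|$ and $b_n=\|Q_n^{\gamma_n}z_n-Q^{\gamma_n}z_n\|$. From \eqref{eq:lemQQn1}, $a_n^2\leq\|z_n-z^*\|^2-\phi^{\gamma_n}(z_n)$, from \eqref{eq:lem2} $b_n\leq\varpi_n(z^*)M(z^*)+\eta_n(z^*)$, hence $(b_n)_{n\in\N}$ is summable (and so is $(b_n^2)_{n\in\N}$). Expanding $\|z_{n+1}-z^*\|^2\leq a_n^2+2a_n b_n+b_n^2$ and bounding $a_n\leq\|z_n-z^*\|\leq M(z^*)$ yields
\begin{equation*}
\|z_{n+1}-z^*\|^2\leq \|z_n-z^*\|^2-\phi^{\gamma_n}(z_n)+\varepsilon_n(z^*),
\end{equation*}
with $\varepsilon_n(z^*):=2M(z^*)b_n+b_n^2$ summable. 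The standard deterministic Robbins--Siegmund lemma (see, e.g., \cite[Lemma~5.31]{bauschkebook2017}) applied to this inequality delivers both \ref{lemma:QQn1} (convergence of $(\|z_n-z^*\|^2)_{n\in\N}$, hence of $(\|z_n-z^*\|)_{n\in\N}$) and \ref{lemma:QQn3} ($\sum_n\phi^{\gamma_n}(z_n)<+\infty$); this step is valid for every $z^*\in S$.

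Assertion \ref{lemma:QQn2} is then immediate: by definition of $z_{n+1}$, $\|z_{n+1}-Q^{\gamma_n}z_n\|=b_n$, and we already have $\sum_n b_n<+\infty$. Finally for \ref{lemma:QQn4}, since $(\|z_n-z^*\|)_{n\in\N}$ converges for every $z^*\in S$ and every weak cluster point of $(z_n)_{n\in\N}$ lies in $S$ by assumption, Opial's lemma (\cite[Lemma~2.47]{bauschkebook2017}) concludes that $(z_n)_{n\in\N}$ converges weakly to some point of $S$.

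The only genuinely delicate point is the appearance of $\|z_n-z^*\|$ on the right-hand side of \eqref{eq:lem2}, which prevents a direct Robbins--Siegmund application on the squared distance; this is why the preliminary boundedness step via the non-squared inexact Fejér inequality is unavoidable before upgrading to the squared form that feeds into Robbins--Siegmund. Once boundedness is secured, the remaining arguments are routine.
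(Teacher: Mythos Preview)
Your proof is correct and follows essentially the same route as the paper's: derive the non-squared inexact Fej\'er inequality $\|z_{n+1}-z^*\|\le(1+\varpi_n(z^*))\|z_n-z^*\|+\eta_n(z^*)$ to secure boundedness, use this to make the perturbation terms $b_n$ summable, then pass to the squared inequality and invoke \cite[Lemma~5.31]{bauschkebook2017} to obtain \ref{lemma:QQn1} and \ref{lemma:QQn3}. The only cosmetic differences are that the paper applies Lemma~5.31 directly to the non-squared recursion to get \ref{lemma:QQn1} (rather than your explicit induction for boundedness followed by a single squared application), and that it cites the quasi-Fej\'er convergence theorem \cite[Theorem~5.33(iv)]{bauschkebook2017} for \ref{lemma:QQn4} instead of Opial's lemma---but these are equivalent here.
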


\begin{proof}
Let $z^* \in S$. 
\begin{enumerate}
	\item By \eqref{eq:lemQQn1} 
	applied to $z=z_n$ and $\gamma=\gamma_n$ we obtain
	\begin{align}\label{eq:lemp1}
		\|Q^{\gamma_n}z_n- z^*\|^2 \leq & 
		\|z_{n}- 
		z^*\|^2 -\phi^{\gamma_n}(z_n).
	\end{align}
	In particular, 
	\begin{align}
		\|Q^{\gamma_n}z_n- z^*\| \leq & 
		\|z_{n}- 
		z^*\|.
	\end{align}
	Additionally, it follows from \eqref{eq:lem2} that 
	\begin{equation}\label{eq:lemp2}
		\| Q_n^{\gamma_n} z_n - Q^{\gamma_n}z_n
		\| \leq  \varpi_n(z^*) \| z_n
		-z^*  \|
		+ \eta_n(z^*) .
	\end{equation}
	Thus, 
	\begin{align}
		\|z_{n+1}-z^*\| &\leq  \|Q^{\gamma_n} z_{n}-z^*\| + \| 
		Q_n^{\gamma_n} z_n - Q^{\gamma_n}z_n\| 
		\label{eq:lemp30}\\
		&\leq  (1+\varpi_n(z^*) )\| z_n 
		-z^*  \|
		+\eta_n(z^*).\label{eq:lemp3}
	\end{align} 
	Therefore, from \cite[Lemma 5.31]{bauschkebook2017}, we conclude 
	that 
	$(\|z_{n}-z^*\|)_{n \in \N}$ is convergent.
	\item We deduce from \ref{lemma:QQn1} that
	$\delta=\sup_{n \in 
		\N} 
	\|z_n-z^*\| < +\infty$. Since 
	$(\varpi_n(z^*))_{n\in \N}$ and $(\eta_n(z^*) )_{n\in \N}$
	are summable sequences, we
	conclude  from \eqref{eq:lemp2}
	that $\sum_{n \in \N} \|Q^{\gamma_n}_n 
	z_n-Q^{\gamma_n}z_n\| < 
	+\infty$.
	\item By using Cauchy-Schwarz inequality, it follows from  
	\eqref{eq:lemp1} that
	\begin{align}\label{eq:lempdesfej}
		\|z_{n+1}-z^*\|^2
		& =  \|Q^{\gamma_n} z_{n}-z^*\|^2 + 
		2\scal{Q^{\gamma_n} z_{n}-z^*}{Q^{\gamma_n}_n 
			z_n - Q^{\gamma_n} 
			z_n} 	\nonumber\\
		& \hspace{5cm} + \| Q^{\gamma_n}_n z_n - Q^{\gamma_n} z_n\|^2 \nonumber\\
		& \leq 	\|z_{n}- 
		z^*\|^2 -\phi^{\gamma_n}(z_n)+ 
		2\|Q^{\gamma_n} z_{n}-z^*\| \|Q^{\gamma_n}_n 
		z_n - Q^{\gamma_n} 
		z_n\| \nonumber\\
		&\hspace{5cm}+ \| Q^{\gamma_n}_n z_n - Q^{\gamma_n} 
		z_n\|^2\nonumber\\
		& \leq 	\|z_{n}- 
		z^*\|^2 -\phi^{\gamma_n}(z_n)+ 
		2\delta \|Q^{\gamma_n}_n 
		z_n - Q^{\gamma_n} 
		z_n\|\\
		& \hspace{5cm}  + \| Q^{\gamma_n}_n z_n - Q^{\gamma_n} z_n\|^2. \nonumber
	\end{align}
	In addition, according to \ref{lemma:QQn2},
	\begin{equation}
		\sum_{n\in \N} 2\delta \|Q^{\gamma_n}_n 
		z_n - Q^{\gamma_n} 
		z_n\| + \| Q^{\gamma_n}_n z_n - Q^{\gamma_n} z_n\|^2 < 
		+\infty.
	\end{equation}
	Then, by invoking again \cite[Lemma 5.31]{bauschkebook2017}, we 
	conclude that
	$\sum_{n\in \N} \phi^{\gamma_n}(z_n) < +\infty$.
	\item  Eq. \eqref{eq:lempdesfej} shows that 
	$(z_n)_{n\in \N}$ is a quasi-Fej\`er sequence with respect to $S$.
	The weak convergence of $(z_n)_{n \in \N}$ thus follows 
	\cite[Theorem~5.33(iv)]{bauschkebook2017}.
\end{enumerate}  
\end{proof}

\begin{lem}\label{lemma:seq}
Let $(\vartheta,\overline{\eta})\in ]0,1[^2$, let $\eta_0 \in \RP$, 
let $\{\varpi_n\}_{n\in \N}\subset \RP$  be such that 
$\lim_{n \to +\infty} \varpi_n =0$, and let $\{a_n\}_{n\in \N}\subset 
\RP$ be such that
\begin{equation}\label{eq:lemmaseq}
	a_{n+1} \leq (\vartheta+\varpi_n)a_n+\eta_0\overline{\eta}^n.
\end{equation}
Then, $(a_n)_{n \in \N}$ converges linearly to $0$.
\end{lem}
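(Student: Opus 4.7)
The plan is to exploit that $\varpi_n \to 0$ to absorb the perturbation into a strict contraction factor, and then unroll the recursion, using a geometric-series estimate to dominate the forcing term $\eta_0 \overline{\eta}^n$.

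First I would fix $r \in \bigl(\max\{\vartheta,\overline{\eta}\},1\bigr)$, which is possible since $\vartheta,\overline{\eta}\in(0,1)$. Because $\varpi_n\to 0$, there exists $N\in\N$ such that $\varpi_n \le r-\vartheta$ for every $n\ge N$, hence $\vartheta+\varpi_n \le r < 1$. The recursive inequality \eqref{eq:lemmaseq} then yields, for all $n\ge N$,
\begin{equation}
a_{n+1} \le r\, a_n + \eta_0 \overline{\eta}^{\,n}.
\end{equation}

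Next I would unroll this inequality. A straightforward induction on $k\in\N$ gives
\begin{equation}
a_{N+k} \le r^k a_N + \eta_0 \sum_{j=0}^{k-1} r^{\,k-1-j}\,\overline{\eta}^{\,N+j}
= r^k a_N + \eta_0\, \overline{\eta}^{\,N} r^{k-1} \sum_{j=0}^{k-1} \Bigl(\frac{\overline{\eta}}{r}\Bigr)^{\!j}.
\end{equation}
Since $\overline{\eta}/r<1$, the geometric sum is bounded above by $r/(r-\overline{\eta})$, so
\begin{equation}
a_{N+k} \le r^k \left( a_N + \frac{\eta_0\,\overline{\eta}^{\,N}}{r-\overline{\eta}}\right).
\end{equation}

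Setting $C = r^{-N}\bigl(a_N + \eta_0\overline{\eta}^{\,N}/(r-\overline{\eta})\bigr)$, this bound can be rewritten as $a_n \le C\, r^n$ for every $n\ge N$, which establishes the claimed linear convergence of $(a_n)_{n\in\N}$ to $0$ with rate $r$. There is no serious obstacle here; the only subtle point is the choice of the contraction factor $r$, which must simultaneously dominate $\vartheta$ (asymptotically, in the perturbed contraction constant), dominate $\overline{\eta}$ (so that the geometric series controlling the forcing term converges), and remain strictly below $1$. All three conditions can be met because $\max\{\vartheta,\overline{\eta}\}<1$.
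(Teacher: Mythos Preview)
Your proof is correct and follows essentially the same approach as the paper's: absorb the vanishing perturbation $\varpi_n$ into a strict contraction factor after some index, unroll the recursion, and control the forcing term via a geometric sum. The only cosmetic difference is that you choose your contraction factor $r$ to dominate both $\vartheta$ and $\overline{\eta}$ from the outset, which sidesteps the paper's ``without loss of generality $\overline{\vartheta}\neq\overline{\eta}$'' step and yields a slightly cleaner bound.
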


\begin{proof}
Since $(\varpi_n)_{n\in \N}\subset \RP$ 
converges to zero and $\vartheta < 1$, there exist $n_0 \in \N$ and 
$\overline{\vartheta}\in ]\vartheta,1[$
such that, for every $n\ge n_0$,
\begin{equation}
	a_{n+1}  
	\leq \overline{\vartheta} a_n 
	+\eta_0\overline{\eta}^n.
\end{equation}
We deduce that, for every $n> n_0$,
\begin{equation}
	a_{n} \leq \overline{\vartheta}^{n-n_0} a_{n_0} 
	+\eta_0\sum_{m=n_0}^{n-1}\overline{\eta}^m\overline{\vartheta}^{n-m-1}.
\end{equation}
Without loss of generality, it can be assumed that 
$\overline{\vartheta}\neq \overline{\eta}$.
We have then, for every $n> n_0$,
\begin{align}
	a_{n} &\le
	\overline{\vartheta}^{n-n_0} a_{n_0}
	+\eta_0\overline{\eta}^{n_0}
	\frac{\overline{\vartheta}^{n-n_0}-\overline{\eta}^{n-n_0}}
	{\overline{\vartheta}-\overline{\eta}}\nonumber\\
	& \le  \left(a_{n_0}+2\frac{\eta_0\overline{\eta}^{n_0}}{
		|\overline{\vartheta}-\overline{\eta}|}
	\right)
	\max\{\overline{\vartheta},\overline{\eta}\}^{n-n_0},
\end{align}
which shows the linear convergence of
$(a_n)_{n\in \N}$ to $0$.
\end{proof}

\begin{lem}\label{lemma:QQnlin}
Let $I\subset \RPP$, let $z^*\in \H$,
and let $(\vartheta,\overline{\eta})\in ]0,1[^2$.
Suppose that, for every $\gamma \in I$, 
$Q^{\gamma}\colon \H \to \H$ is such that, for every $z \in \H$,
\begin{equation}\label{eq:lem1}
	\|Q^{\gamma}z-z^*\|^2\leq \vartheta\|z-z^*\|^2.
\end{equation}
Let $\{\varpi_n(z^*)\}_{n\in \N}\subset \RP$  be such that 
$\lim_{n \to +\infty} 
\varpi_n(z^*)  =0$
and let $\eta_0(z^*) \in \RP$.
For every $n \in \N$ and $\gamma \in I$, let $Q_n^{\gamma}:\H \to 
\H$ be such 
that \eqref{eq:lem2}
holds where $\eta_n(z^*) 
= \eta_0(z^*) \overline{\eta}^n$.
Let $\{\gamma_n\}_{n \in \N} \subset I$ and let 
$z_0 
\in 
\H$. Then the sequence $(z_n)_{n \in \N}$ defined by  
\eqref{eq:algolemma} converges linearly to $z^*$.
\end{lem}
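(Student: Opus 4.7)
The plan is to reduce the claim to the scalar recurrence analysed in Lemma \ref{lemma:seq}, by tracking $a_n := \|z_n - z^*\|$ and deriving for it a bound of the form \eqref{eq:lemmaseq}.

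First, I would apply the triangle inequality to split the one-step error into a contraction term and a perturbation term:
\begin{equation*}
a_{n+1} = \|Q_n^{\gamma_n} z_n - z^*\| \leq \|Q^{\gamma_n} z_n - z^*\| + \|Q_n^{\gamma_n} z_n - Q^{\gamma_n} z_n\|.
\end{equation*}
The first summand is controlled by the contraction hypothesis \eqref{eq:lem1}: taking square roots gives $\|Q^{\gamma_n} z_n - z^*\| \leq \sqrt{\vartheta}\, a_n$. The second summand is the approximation error; using \eqref{eq:lem2} at $z=z_n$ together with $\eta_n(z^*) = \eta_0(z^*)\overline{\eta}^n$ yields $\|Q_n^{\gamma_n} z_n - Q^{\gamma_n} z_n\| \leq \varpi_n(z^*)\, a_n + \eta_0(z^*)\overline{\eta}^n$.

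Combining the two estimates, I would obtain
\begin{equation*}
a_{n+1} \leq \bigl(\sqrt{\vartheta}+\varpi_n(z^*)\bigr) a_n + \eta_0(z^*)\,\overline{\eta}^n,
\end{equation*}
which is precisely the scalar recurrence \eqref{eq:lemmaseq} with $\vartheta$ replaced by $\sqrt{\vartheta}$. Since $\vartheta \in \,]0,1[$ we have $\sqrt{\vartheta}\in\,]0,1[$, and by assumption $\varpi_n(z^*) \to 0$ and $\overline{\eta}\in\,]0,1[$, so Lemma \ref{lemma:seq} applies directly and gives the linear convergence of $(a_n)_{n\in\N}$ to $0$, i.e., the linear convergence of $(z_n)_{n\in\N}$ to $z^*$.

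The argument is essentially a direct composition of the two earlier lemmas; there is no serious obstacle. The only subtlety worth flagging is that hypothesis \eqref{eq:lem1} is stated at the level of squared norms, so to match the linear recurrence of Lemma \ref{lemma:seq} one must pass to norms, which is why the effective contraction factor in the final recursion is $\sqrt{\vartheta}$ rather than $\vartheta$; this is harmless since $\sqrt{\vartheta}$ still lies in $]0,1[$.
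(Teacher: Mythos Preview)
Your proof is correct and follows essentially the same route as the paper: apply the triangle inequality (this is precisely \eqref{eq:lemp30}), bound the first term via the contraction hypothesis \eqref{eq:lem1} and the second via \eqref{eq:lem2}, then invoke Lemma~\ref{lemma:seq}. Your observation that the effective contraction factor is $\sqrt{\vartheta}$ rather than $\vartheta$ is the correct reading of \eqref{eq:lem1}; the paper writes $\vartheta$ in the ensuing recursion, which appears to be a harmless typo since both lie in $]0,1[$.
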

\begin{proof}
It follows from \eqref{eq:lemp30} that,
for every $n\in\N$,
\begin{equation}
	\|z_{n+1}-z^*\|   
	\leq  (\vartheta+\varpi_n(z^*) )\| z_n 
	-z^*  \|
	+\eta_n(z^*).
\end{equation}
The result then follows from Lemma~\ref{lemma:seq}.
\end{proof}

\section{Forward-Backward-Half Forward Splitting}
\label{se:FBFH}
In this section, we will consider the following variant of the FBHF 
algorithm.
\begin{algo}\label{algo:BAD}
In the context of Problem~\ref{prob:problem1}, let $\{\gamma_n\}_{n 
	\in \N} \subset
\RPP$ be such that $(\forall n\in \N)$
$\gamma_n \rho > -1$, and let $z_0 \in \H$. Consider
the iteration
\begin{equation}
	\label{e:algon}
	(\forall n \in \N) \quad \left\lfloor
	\begin{aligned}
		&u_n =  D_{K_n} z_n\\
		&y_n = z_n-\gamma_n
		(C z_n + u_n)\\
		&x_n = J_{\gamma_n A} (y_n)\\
		&z_{n+1} = x_n + \gamma_n (u_n - 
		D_{K_n} x_n).
	\end{aligned}
	\right.
\end{equation} 
\end{algo}

\begin{notation}
In the context of Problem~\ref{prob:problem1}, for every $\gamma \in 
\RPP$ such that $\gamma \rho > -1$, define the 
operators
\begin{equation}\label{eq:defOp}
	S^\gamma=J_{\gamma 
		A} (\id_{\H}- 
	\gamma (C+D_K)), \quad T^\gamma= 
	(\id_{\H}-\gamma D_K)\circ S^\gamma  + \gamma D_K
\end{equation}
and, for every $n \in \N$,
\begin{equation}\label{eq:defOpn}
	S^{\gamma}_n=J_{\gamma 
		A} (\id_{\H}- 
	\gamma (C+D_{K_n})), \quad T^{\gamma}_n= 
	(\id_{\H}-\gamma D_{K_n})\circ S^{\gamma}_n  + \gamma D_{K_n}.
\end{equation}
Additionally, let $\chi \in \left] 0,
\min\left\{2{\beta},{1}/\kappa_K\right\}\right[$ be defined by
\begin{equation}\label{e:chipre}
	\chi = 
	\begin{cases}
		\displaystyle \frac{4{\beta}}{1+\sqrt{1+16{\beta}^2
				\kappa_K^2}}
		& \mbox{if $\rho \geq 0$}\\
		\displaystyle  \min\left\{\frac{4{\beta}}{1+\sqrt{1+16{\beta}^2
				\kappa_K^2}},-\frac{1}{\rho}\right\} & \mbox{otherwise.}
	\end{cases}
\end{equation}
\end{notation}
\begin{prop} \label{prop:BD}
	In the context of Problem~\ref{prob:problem1} and  
	Assumption~\ref{assume:1}, 
	let $\gamma \in [\varepsilon,\chi-\varepsilon]$, for some 
	$\varepsilon \in
	\left]0,\chi/2\right[$. Then, the following assertions hold:
	\begin{enumerate}
		\item \label{prop:BD1} $ \zer (A+C+D_K) 
		= \fix 
		T^{\gamma}$
		\item \label{prop:BD3}  For every $z \in \H$ and every $z^* 
		\in \fix T^{\gamma}$		
		\begin{align}\label{eq:propBD3}
			\quad \|T^{\gamma} z- z^*\|^2 \leq & \|z- 
			z^*\|^2 -\kappa_K^2\varepsilon^2
			\|z-S^{\gamma} 
			z\|^2-\frac{2\beta\varepsilon^2}{\chi}\|Cz-Cz^*\|^2.
		\end{align}
		\item \label{prop:BD4}  Suppose that 
		$\hat{\rho}=\rho+\alpha\lambda_{\min}-\widetilde{\zeta}_{L^*-K} 
		>0$. 
		Then $\fix 
		T^{\gamma}$ is a singleton $\{z^*\}$ and, for every  $z \in 
		\H$,	
		\begin{equation}\label{eq:propBD4}
			\quad \|T^{\gamma} z- z^*\| 
			\le \sqrt{1-\varepsilon\min\{\kappa_K^2\varepsilon/2, 
				\hat{\rho}
				\}}\,\|z- 
			z^*\|.
		\end{equation}	
	\end{enumerate}
\end{prop}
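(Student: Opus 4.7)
The plan is to treat $T^\gamma$ as a Forward-Backward-Half-Forward-type operator in which $D_K$ plays the role of the Lipschitzian component (with Lipschitz constant $\kappa_K$), $C$ remains the cocoercive operator, and $A$ provides the backward step through its resolvent; the monotonicity properties of $A + D_K$ established in Proposition~\ref{prop:MP} will drive the analysis. For part~\ref{prop:BD1}, the forward inclusion is immediate: if $z^* \in \zer(A + C + D_K)$, then $-C z^* - D_K z^* \in A z^*$ rearranges to $z^* = S^\gamma z^*$, whence $T^\gamma z^* = z^*$. For the converse, if $z \in \fix T^\gamma$, then the identity $z - S^\gamma z = \gamma(D_K z - D_K S^\gamma z)$, combined with the Lipschitz bound $\|D_K z - D_K S^\gamma z\| \leq \kappa_K\|z - S^\gamma z\|$ and the constraint $\gamma \leq \chi - \varepsilon < 1/\kappa_K$, forces $z = S^\gamma z$; the resolvent definition then yields $0 \in A z + C z + D_K z$.

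For part~\ref{prop:BD3}, fix $z \in \H$ and $z^* \in \fix T^\gamma$ and set $x = S^\gamma z$. The resolvent inclusion $(z - x)/\gamma - C z - D_K z \in A x$ and the fixed-point inclusion $-C z^* - D_K z^* \in A z^*$, augmented by $D_K x$ and $D_K z^*$ respectively, place both pairs in the graph of $A + D_K$, which is monotone by Proposition~\ref{prop:MP}.\ref{prop:MP1}. Combining the resulting monotonicity inequality with the polarization identity for $2\langle x - z^* \mid z - x\rangle$ controls the cross term $2\gamma\langle x - z^* \mid D_K z - D_K x\rangle$ appearing in the expansion $\|T^\gamma z - z^*\|^2 = \|(x - z^*) + \gamma(D_K z - D_K x)\|^2$; together with the Lipschitz bound $\|D_K z - D_K x\| \leq \kappa_K\|z - x\|$, this produces an intermediate estimate of the form $\|T^\gamma z - z^*\|^2 \leq \|z - z^*\|^2 - (1 - \gamma^2\kappa_K^2)\|z - x\|^2 - 2\gamma\langle x - z^* \mid Cz - Cz^*\rangle$. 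The $C$-cross term is then split via the cocoercivity of $C$ and a weighted Young inequality whose parameter is calibrated so that, in view of the relation $\chi + 2\beta\kappa_K^2\chi^2 = 2\beta$ characterising $\chi$, the residual coefficients of $\|z - x\|^2$ and $\|Cz - Cz^*\|^2$ collapse uniformly over $\gamma \in [\varepsilon, \chi - \varepsilon]$ to the stated $-\kappa_K^2\varepsilon^2$ and $-2\beta\varepsilon^2/\chi$.

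For part~\ref{prop:BD4}, Proposition~\ref{prop:MP}.\ref{prop:MP3} upgrades the monotonicity of $A + D_K$ to $\hat\rho$-strong monotonicity, which injects an additional summand $-2\gamma\hat\rho\|x - z^*\|^2$ into the bound of part~\ref{prop:BD3}; uniqueness of the fixed point then follows from Proposition~\ref{prop:MP}.\ref{prop:MP3} combined with part~\ref{prop:BD1}. Discarding the nonpositive $\|Cz - Cz^*\|^2$ contribution and using the parallelogram-type estimate $\|z - x\|^2 + \|x - z^*\|^2 \geq \tfrac12\|z - z^*\|^2$, the two remaining negative terms satisfy $-\kappa_K^2\varepsilon^2\|z - x\|^2 - 2\gamma\hat\rho\|x - z^*\|^2 \leq -\tfrac12\min\{\kappa_K^2\varepsilon^2, 2\gamma\hat\rho\}\|z - z^*\|^2$, and $\gamma \geq \varepsilon$ then delivers the contraction factor $\sqrt{1 - \varepsilon\min\{\kappa_K^2\varepsilon/2, \hat\rho\}}$. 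The principal obstacle is the algebraic calibration in part~\ref{prop:BD3}: the defining equation for $\chi$ is precisely what makes the weighted Young split close cleanly, and the slack $\varepsilon$ is what provides the strict negative margin across the entire admissible range of step sizes.
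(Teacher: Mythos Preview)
Your proposal is correct and follows essentially the same approach as the paper: the same use of the monotonicity of $A+D_K$ from Proposition~\ref{prop:MP} to control the cross term in the expansion of $\|T^\gamma z - z^*\|^2$, the same cocoercivity-plus-Young splitting of the $C$-term calibrated by the defining quadratic for the step-size threshold, and the same strong-monotonicity upgrade combined with $\|z-x\|^2+\|x-z^*\|^2 \ge \tfrac12\|z-z^*\|^2$ for part~\ref{prop:BD4}. One minor caveat: the relation $\chi + 2\beta\kappa_K^2\chi^2 = 2\beta$ you invoke characterizes the quantity the paper denotes $\chi_0 = 4\beta/(1+\sqrt{1+16\beta^2\kappa_K^2})$, which coincides with $\chi$ only when $\rho \ge 0$; when $\rho<0$ one has $\chi \le \chi_0$, and the paper uses this inequality (rather than equality) to pass from the $\chi_0$-based bound to the stated one---your calibration argument should be phrased accordingly.
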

\begin{proof}\ 
	\begin{enumerate}
		\item The property directly follows  from the Lipschitzian 
		property of 
		$D_K$ 
		and 
		\cite[Proposition~2.1.1]{BricenoDavis2018}.
		\item \label{prop:BD3proof}	Note that,  if $z^* \in 
		\zer(A+C+D_K)$, then $-\gamma C 
		z^* \in 
		\gamma (A+ D_K) z^*$. Additionally, by defining 
		$y=z-\gamma(C+D_K)z$ 
		and $x=S^\gamma z=J_{\gamma A} y$, we have  $y-x+\gamma D_K x \in 
		\gamma 
		(A +D_K) 
		x$. Therefore, the monotonicity of $A+D_K$ established in 
		Proposition~\ref{prop:MP}.\ref{prop:MP1} yields 
		\begin{equation}\label{eq:monoAD}
			0 \leq \scal{x-z^*}{y-x+\gamma 
				D_K 
				x+\gamma C z^*} 
		\end{equation}
		and we deduce that
		\begin{align}
			&\scal{x-z^*}{x-y-\gamma D_K x}\nonumber\\
			&=  
			\scal{x-z^*}{\gamma Cz^*}
			-\scal{x-z^*}{y-x+\gamma 
				D_K 
				x+\gamma C z^*}\nonumber\\
			& \leq \scal{x-z^*}{\gamma Cz^*}.
		\end{align}
		By proceeding similarly to the proof of \cite[Proposition 
		2.1.3]{BricenoDavis2018},
		\begin{align}
			\|T^{\gamma} z- z^*\|^2&=\|x-z^*+\gamma (D_K z - D_K x)\|^2\nonumber\\
			&\le \|x-z^*\|^2 + 2\gamma \scal{x-z^*}{Cz^*}
			+ 2\scal{x-z^*}{z-x-\gamma Cz}\nonumber\\
			&\hspace{6cm}
			+ \gamma^2 \| D_K z - D_K x \|^2\nonumber\\
			& = \|z-z^*\|^2-\|z-x\|^2+
			2\gamma \scal{x-z^*}{Cz^*-Cz}
			\nonumber\\
			&\hspace{6cm}
			+ \gamma^2 \| D_K z - D_K x \|^2
			\label{e:BDmaj1} 
		\end{align}
		By using the cocoercivity of $C$, for every
		$\eta \in \RPP$,
		\begin{align}
			2\gamma\scal{x-z^*}{Cz^*-Cz}&\le 
			2\gamma\scal{x-z}{Cz^*-Cz}-2\gamma \beta 
			\|Cz^*-Cz\|^2\nonumber\\
			& \le \eta\|x-z\|^2+
			\gamma \left(\frac{\gamma}{\eta}-2 \beta\right)
			\|Cz^*-Cz\|^2.
			\label{e:BDmaj2} 
		\end{align}
		Combining \eqref{e:BDmaj1}, \eqref{e:BDmaj2}, and using the fact 
		that $D_K$ is $\kappa_K$-Lipschitz leads to
		\begin{align}
			&\|T^{\gamma} z- z^*\|^2\nonumber\\ 
			&\le \|z-z^*\|^2-(1-\eta-\gamma^2 \kappa_K^2)\|z-x\|^2-
			\gamma \left(2\beta-\frac{\gamma}{\eta}\right)\|Cz^*-Cz\|^2.
			\label{e:BDmaj2next} 
		\end{align}
		Let us  choose $\eta<1$ such that 
		$\chi_{0} = \frac{\sqrt{1-\eta}}{\kappa_K} = 2 \beta \eta$ where
		$\chi_{0}=4\beta/(1+\sqrt{1+16{\beta}^2 \kappa_K^2})$.
		It follows from \eqref{e:BDmaj2next} that
		\begin{align}
			&\|T^{\gamma} z- z^*\|^2\nonumber\\ 
			&\le \|z-z^*\|^2-\kappa_K^2 (\chi_{0}^2-\gamma^2 )\|z-x\|^2-
			2\beta\gamma 
			\left(1-\frac{\gamma}{\chi_{0}}\right)\|Cz^*-Cz\|^2.
		\end{align}
		By observing that $\chi \le \chi_{0}$
		and taking into account the domain of variations of $\gamma$,
		\eqref{eq:propBD3} is deduced.
		\item From \ref{prop:BD1} and 
		Proposition~\ref{prop:MP}.\ref{prop:MP2}, 
		we conclude that $\fix 
		T^{\gamma}$ is a singleton. 
		The strong monotonicity of $A+D_K$ allows us to
		obtain the following inequality:
		\begin{equation}\label{eq:smonoAD}
			\gamma \hat{\rho}\|x-z^*\|^2 \leq 
			\scal{x-z^*}{y-x+\gamma 
				D_K 
				x+\gamma C z^*}.
		\end{equation}
		Hence, by proceeding similarly to the proof of \ref{prop:BD3},
		we 	obtain
		\begin{multline}
			\|T^{\gamma} z- z^*\|^2 \\ \leq \|z- 
			z^*\|^2 -\kappa_K^2\varepsilon^2
			\|z-S^{\gamma} 
			z\|^2-\frac{2\beta\varepsilon^2}{\chi}\|Cz-Cz^*\|^2
			-2\hat{\rho}\gamma\|x-z^*\|^2.
		\end{multline}
		Therefore, since $\gamma \geq \varepsilon$,
		\begin{align*}
			\quad \|T^{\gamma} z- z^*\|^2 \leq & \|z- 
			z^*\|^2 -\kappa_K^2\varepsilon^2
			\|z-S^{\gamma} 
			z\|^2-2\hat{\rho}\varepsilon\|S^{\gamma} 
			z-z^*\|^2\\
			& \leq \|z- 
			z^*\|^2 -\min\{\kappa_K^2\varepsilon^2, 
			2\hat{\rho}\varepsilon\} (
			\|z-S^{\gamma} 
			z\|^2+\|S^{\gamma} 
			z-z^*\|^2)\\
			& \leq \|z- 
			z^*\|^2 
			-\frac{\varepsilon}{2}\min\{\kappa_K^2\varepsilon,2\hat{\rho}\}
			(
			\|z-z^*\|^2)\\
			& = \left(1-\varepsilon\min\{\kappa_K^2\varepsilon/2, 
			\hat{\rho}
			\}\right)\|z- 
			z^*\|^2.
		\end{align*}
	\end{enumerate}
\end{proof}
\begin{prop}\label{prop:desop}
	Consider the operators 
	defined in 
	\eqref{eq:defOp} and \eqref{eq:defOpn}.
	Then, there exists
	$(\theta_1,\theta_2,\theta_3,\theta'_3,\theta_4,\theta'_4,\theta''_4)\in
	\RPP^7$ such that,
	for every
	$(z,z^*) \in  \H^2$, for every $\gamma \in ]0,\chi[$,
	and for every $n\in \N$,
	the following inequalities hold:
	\begin{enumerate}
		\item \label{prop:desop1} 
		$\|D_{K_n} z - D_Kz\|\leq \omega_n(\theta_1\|z-z^*\|
		+\|(\alpha \id_{\G} + B)Lz^*\|)$ 
		
		\item \label{prop:desop2}
		$\|S^{\gamma}_n z-S^{\gamma}z\| \leq \frac{1}{1+ 
			\gamma\rho}\|D_{K_n} z- D_Kz\|$
		
		\item \label{prop:desop3} 
			$\|S^{\gamma}_n z 
			-S^{\gamma} z^*  \|\leq \frac{1}{1+ \gamma\rho}(\theta_2 \|z 
			- z^*  \| +  \|D_{K_n} z- D_Kz\|)$
			\item \label{prop:desop4}
				%
				$
				\| D_{K_n} 
				S^{\gamma}_n z - 
				D_K S^{\gamma}z\| \leq 
				\omega_n\left(\frac{\theta_3}{1+\gamma \rho}\| z 
				-z^*  \|
				+\|(\alpha \id_{\G} + B)LS^{\gamma} z^* \|\right.$\\
				\hspace*{\fill}$
				\left.+\frac{\theta'_3}{1+\rho \gamma}\|(\alpha \id_{\G} + 
				B)Lz^*\|\right)$

				%
			\item \label{prop:desop5}
				$
				\| T_n^{\gamma} z - T^{\gamma} 
				z\| \leq  \frac{\omega_n}{1+\gamma \rho}(\theta_4\| z 
				-z^*  \|
				+\theta'_4\|(\alpha \id_{\G} + B)LS^{\gamma} z^* \| 
				+\theta''_4\|(\alpha \id_{\G} + B)Lz^*\|)$.
			\end{enumerate}
		\end{prop}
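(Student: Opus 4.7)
The plan is to prove the five inequalities sequentially, each building on the previous, using repeated triangle inequalities together with the Lipschitz properties of the operators involved. The key structural observation is that the affine operator $D_M$ factors through $M$ as $D_M z = M\bigl[(\alpha\id_\G + B)Lz - \alpha c\bigr]$, so that $D_{K_n} - D_K$ factors through $K_n - K$ and inherits the estimate $\|K_n - K\| \leq \omega_n$ from Assumption~\ref{assume:1}.\ref{eq:aproxassume1}. In addition, since $\gamma A$ is $\gamma\rho$-monotone with $\gamma\rho > -1$, the resolvent $J_{\gamma A}$ is single-valued and Lipschitzian with constant $1/(1+\gamma\rho)$ by \cite[Table~1]{BauschkeMoursiXianfu2021}; this yields the $1/(1+\gamma\rho)$ prefactors in (ii)--(v).

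For item (i), I would write $D_{K_n}z - D_Kz = (K_n - K)\bigl[(\alpha\id_\G + B)Lz - \alpha c\bigr]$, use $\|K_n - K\| \leq \omega_n$, and then split the inner norm by the triangle inequality with respect to the reference point $z^*$, using the Lipschitz constant $(\alpha + \zeta)\|L\|$ of $(\alpha\id_\G + B)L$ to produce $\theta_1$. Item (ii) follows directly by applying the Lipschitz bound on $J_{\gamma A}$ to $\|S^\gamma_n z - S^\gamma z\|$, noting that the two input arguments to the resolvent differ only in their $D$-terms. Item (iii) is obtained by adding and subtracting $S^\gamma z$: the Lipschitz property of $S^\gamma$, with constant bounded by $(1 + \gamma/\beta + \gamma\kappa_K)/(1+\gamma\rho)$, controls $\|S^\gamma z - S^\gamma z^*\|$ and yields $\theta_2$, while (ii) takes care of $\|S^\gamma_n z - S^\gamma z\|$.

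For item (iv), I would decompose
\[
D_{K_n}S^\gamma_n z - D_K S^\gamma z = (D_{K_n} - D_K)S^\gamma_n z + D_K(S^\gamma_n z - S^\gamma z),
\]
apply (i) with $z$ replaced by $S^\gamma_n z$ and reference point $S^\gamma z^*$ to the first term, and use the $\kappa_K$-Lipschitz property of $D_K$ together with (ii) on the second. The residual norms $\|S^\gamma_n z - S^\gamma z^*\|$ and $\|D_{K_n}z - D_Kz\|$ are then controlled via (iii) and (i) respectively, which produces the three stated terms (any extra powers of $\omega_n$ arising on cross terms are absorbed by the boundedness of $(\omega_n)_{n\in\N}$). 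Finally, for item (v), the identity
\[
T^\gamma_n z - T^\gamma z = (S^\gamma_n z - S^\gamma z) - \gamma(D_{K_n}S^\gamma_n z - D_K S^\gamma z) + \gamma(D_{K_n}z - D_Kz),
\]
read off directly from \eqref{eq:defOp}--\eqref{eq:defOpn}, lets me bound the three pieces via (ii), (iv), and (i), respectively, and recombine into the desired form.

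The proof contains no deep idea; the main obstacle is purely the bookkeeping of constants, so that after absorbing the bounded quantities $\gamma \in (0,\chi)$ and $\omega_n$ into fixed numerical constants $\theta_{\bullet}$, the final estimates in (iv)--(v) split cleanly into a term in $\|z-z^*\|$, a term in $\|(\alpha\id_\G + B)LS^\gamma z^*\|$ arising from applying (i) at the point $S^\gamma_n z$ with reference $S^\gamma z^*$, and a term in $\|(\alpha\id_\G + B)Lz^*\|$ inherited from every use of (i) at the point $z$ with reference $z^*$.
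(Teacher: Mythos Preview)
Your proposal is correct and follows essentially the same route as the paper: the same triangle-inequality decompositions in each item, the same use of the $(1+\gamma\rho)^{-1}$-Lipschitz property of $J_{\gamma A}$, and the same bookkeeping of constants absorbed via $\gamma<\chi$ and $\overline{\omega}=\sup_n\omega_n<+\infty$. Your observation that $D_{K_n}z-D_Kz=(K_n-K)\bigl[(\alpha\id_\G+B)Lz-\alpha c\bigr]$ retains the $-\alpha c$ contribution is in fact slightly more careful than the paper's own computation in item~(i), which silently drops this term; this only shifts the additive constant and does not affect the argument.
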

		\begin{proof}
			First note that, in view of Assumption~\ref{assume:1}, 
			$\overline{\omega}=\sup_{n\in \N} \omega_n <+\infty$. Let 
			$(z,z^*)\in \H^2$ and let $n\in \N$.
			\begin{enumerate}
				\item It follows from 
				Assumption~\ref{assume:1}.\ref{eq:aproxassume1} that
				\begin{align*}
					\|D_{K_n} z - D_K z\|	&= \|K_n(\alpha \id_{\G} + 
					B)Lz-K(\alpha 
					\id_{\G} + 
					B)Lz\|\\
					&\leq \|K_n-K\|\|(\alpha \id_{\G} + B)Lz\|\\
					&\leq \omega_n\|(\alpha \id_{\G} + B)Lz\|\\
					&\leq \omega_n(\|(\alpha \id_{\G} + B)Lz-(\alpha 
					\id_{\G} + 
					B)Lz^*\|\\
					&\hspace{4cm}+\|(\alpha \id_{\G} + B)Lz^*\|)\\
					&\leq \omega_n((\alpha+\zeta)\|L\|\|z-z^*\|
					+\|(\alpha \id_{\G} + B)Lz^*\|).
				\end{align*}
				The result follows by setting 
				\begin{equation}\label{e:deftheta1}
					\theta_1=(\alpha + \zeta)\|L\|.
				\end{equation}
				\item It follows the $(1+\gamma 
				\rho)^{-1}$-Lipschitzianity of 
				$J_{\gamma A}$  that 	
				\begin{align*}
					\|S^{\gamma}_n z-S^{\gamma}z\| &= \|J_{\gamma A} 
					(\id_{\H}- 
					\gamma 
					(C+D_{K_n}))z- J_{\gamma A} (\id_{\H}- \gamma 
					(C+D_K))z\|\\
					&\leq \frac{1}{1+\rho \gamma}\| (\id_{\H}- \gamma 
					(C+D_{K_n}))z- (\id_{\H}- \gamma 
					(C+D_K))z\|\\
					& =\frac{1}{1+\rho \gamma}\|D_{K_n} z- D_Kz\|.
				\end{align*}
				\item Similarly, it follows from \ref{prop:desop2} and 
				the 
				Lipschitzianity  of $J_{\gamma A}$ that 
				\begin{align*}
					\|S^{\gamma}_n z 
					-S^{\gamma} z^*  \| & \leq	\|S^{\gamma} z
					-S^{\gamma} z^*  \|+ \|S^{\gamma}_n z 
					-S^{\gamma} z  \|\\
					& \leq	\frac{1}{1+\gamma 
						\rho}(\|(\id_{\H}-\gamma(C+D_K)) z 
					-(\id_{\H}-\gamma(C+D_K)) z^*  \| \\
					&\hspace{6.5cm}+ \|D_{K_n} z- 
					D_Kz\|)\\
					& \leq	\frac{1}{1+\gamma 
						\rho}\big((1+\gamma(\beta^{-1}+\kappa_K)) \| z 
					- z^*  \|+ \|D_{K_n} z- D_K z\|\big).
				\end{align*}
				The conclusion follows by defining 
				$\theta_2=1+\chi(\beta^{-1}+\kappa_K)$.
				\item	It follows from \ref{prop:desop1}, the 
				Lipschitzian property 
				of $D_K$, \ref{prop:desop3}, and 
				\ref{prop:desop2} that
				\begin{align*}
					&\| D_{K_n} 
					S^{\gamma}_n z - 
					D_K S^{\gamma}z\|\\ 
					&= \|D_{K_n} 
					S^{\gamma}_n z - D_K 
					S^{\gamma}_n z + D_K
					S^{\gamma}_n z-
					D_KS^{\gamma}z \|\\
					& \leq   \|D_{K_n} 
					S^{\gamma}_n z - D_K 
					S^{\gamma}_n z \|+ \|D_K
					S^{\gamma}_n z-
					D_K S^{\gamma}z \|\\
					& \leq   \omega_n (\theta_1\|S^{\gamma}_n z 
					-S^{\gamma} z^*  \|
					+\|(\alpha\id_{\G} + B)LS^{\gamma} z^* \|)
					+ \kappa_K\|
					S^{\gamma}_n z-
					S^{\gamma}z\|\\
					& \leq   \omega_n\Big(\frac{\theta_1}{1+\rho \gamma} 
					(\theta_2\| z 
					-z^*  \| + \|D_{K_n} z- D_Kz\|)
					+\|(\alpha\id_{\G} + B)LS^{\gamma} z^* \|\Big)\\ 
					&\hspace{7cm}
					+ \frac{\kappa_K}{1+\rho \gamma} \|D_{K_n} z- D_Kz\|\\
					& \leq   \omega_n\left(\frac{\theta_1\theta_2}{1+\rho 
						\gamma}\| z 
					-z^*  \| + 
					\|(\alpha\id_{\G} + B)LS^{\gamma} z^* \|\right.\\ 
					&\hspace{4cm}\left.
					+ \frac{(\kappa_K+\theta_1\omega_n)}{1+\gamma 
						\rho}(\theta_1\|z-z^*\|+
					\|(\alpha\id_{\G} + B)L z^* \|)\right)\\
					&= \omega_n\left(
					\theta_1 \frac{\kappa_K+\theta_1 
						\omega_n+\theta_2}{1+\gamma \rho}
					\| z -z^*  \|
					+\|(\alpha\id_{\G} + B)LS^{\gamma} z^* \|\right.\\ 
					&\hspace{6cm}\left.+\frac{\kappa_K+\theta_1\omega_n}{1+\rho\gamma}\|(\alpha\id_{\G}
					+ B)Lz^*\|)\right).
				\end{align*}
				The result follows by defining 
				$(\theta_3,\theta'_3)=(
				\theta_1 (\kappa_K+\theta_1 \overline{\omega}+\theta_2),
				\kappa_K+\theta_1\overline{\omega}) $.
				\item It follows from \ref{prop:desop1}, 
				\ref{prop:desop2}, and 
				\ref{prop:desop4} that
				\begin{align*}
					&\| T_n^{\gamma} z - T^{\gamma} 
					z\|\\
					&= \|(\id_{\H}-\gamma D_{K_n})S_n^{\gamma} z+\gamma 
					D_{K_n}z - 
					(\id_{\H}-\gamma 
					D_K)S^{\gamma}z-\gamma D_K z\|\\
					& \leq \|S^{\gamma}_n z-S^{\gamma}z\|+\gamma \| 
					D_{K_n} 
					S^{\gamma}_n z - 
					D_KS^{\gamma}z\| + \gamma\|D_{K_n} z - D_Kz\|\\
					& \leq \left(\frac{1}{1+\gamma\rho}+\gamma\right)\| 
					D_{K_n}z -D_K z \|+\gamma \omega_n\left(\frac{\theta_3}{1+\gamma 
						\rho} \|z-z^*\|\right.\\ 
					&\hspace{3cm}\left.+\|(\alpha\id_{\G} + B)LS^{\gamma} z^* 
					\|+
					\frac{\theta'_3}{1+\gamma\rho} \|(\alpha\id_{\G} + 
					B)Lz^*\|\right)\\
					&\leq  \omega_n  \left(
					{\left(\Big(\frac{1}{1+\gamma\rho}+\gamma\Big)\theta_1+\frac{\gamma\theta_3}{1+\gamma
							\rho}\right)}
					\| z 
					-z^*  \|
					+\gamma\|(\alpha\id_{\G} + B)LS^{\gamma} z^* 
					\|\right.\\
					&\hspace{5.5cm}	\left.+\Big(\frac{1+\gamma 
						\theta'_3}{1+\gamma\rho}+\gamma\Big)
					\|(\alpha\id_{\G} + B)Lz^*\|\right).
				\end{align*}
				We conclude by defining 
				\begin{align*}
					\theta_4 &= 
					{((1+\chi+\chi^2|\rho|)\theta_1+\chi\theta)}\\
					\theta'_4 &= {\chi(1+\chi |\rho|)}\\
					\theta''_4 &= 1+{\chi(1+\theta'_3)+\chi^2 |\rho|}.
				\end{align*}
			\end{enumerate}
		\end{proof}
		\begin{teo}\label{teo:convergencia}
			In the context of Problem~\ref{prob:problem1} and  
			Assumption~\ref{assume:1}, let $(\gamma 
			_{n})_{n \in \N}$ be a sequence in
			$[\varepsilon,{\chi}-\varepsilon]$, for some $\varepsilon \in
			\left]0,\chi/2\right[$, 
			consider
			the sequence $(z_n)_{n \in \N}$ generated by 
			Algorithm~\ref{algo:BAD}.
			Then the following hold.
			\begin{enumerate}
				\item  $(z_n)_{n\in\N}$ converges 
				weakly to some solution to Problem~\ref{prob:problem1}.
				\item If $\hat{\rho}>0$ and there exists
				$\overline{\eta} \in [0,1[$ such that, for every $n\in 
				\N$, $\omega_n = \omega_0\, \overline{\eta}^n$, then 
				$(z_n)_{n\in \N}$
				converges linearly to the unique solution to
				Problem~\ref{prob:problem1}.
			\end{enumerate}
		\end{teo}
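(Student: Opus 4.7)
The plan is to recast Algorithm~\ref{algo:BAD} in the fixed-point form $z_{n+1}=T_n^{\gamma_n}z_n$ with $T^{\gamma_n}$ and $T_n^{\gamma_n}$ as in \eqref{eq:defOp}--\eqref{eq:defOpn}, and then invoke Lemma~\ref{lemma:QQn} for (i) and Lemma~\ref{lemma:QQnlin} for (ii). Set $S=\zer(A+C+D_K)$, which equals $\fix T^{\gamma}$ for every admissible $\gamma$ by Proposition~\ref{prop:BD}.\ref{prop:BD1}, and fix any $z^*\in S$.

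For (i), Proposition~\ref{prop:BD}.\ref{prop:BD3} supplies the Fej\'er-type inequality \eqref{eq:lemQQn1} with
\begin{equation*}
\phi^{\gamma}(z)=\kappa_K^2\varepsilon^2\,\|z-S^{\gamma}z\|^2+\frac{2\beta\varepsilon^2}{\chi}\|Cz-Cz^*\|^2.
\end{equation*}
A preliminary check is that $\phi^{\gamma}$ is well-defined independently of the choice of $z^*\in S$: if $z_1^*,z_2^*\in S$ then $-Cz_i^*-D_K z_i^*\in A z_i^*$, and combining monotonicity of $A+D_K$ (Proposition~\ref{prop:MP}.\ref{prop:MP1}) with $\beta$-cocoercivity of $C$ forces $Cz_1^*=Cz_2^*$. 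The perturbation estimate \eqref{eq:lem2} is read off Proposition~\ref{prop:desop}.\ref{prop:desop5}; to turn its $\gamma$-dependence into a $\gamma$-uniform bound I would use that $S^{\gamma}z^*=z^*$ for every $z^*\in S$ (since $z^*-\gamma(Cz^*+D_K z^*)\in(\id_{\H}+\gamma A)z^*$), so the occurrence of $\|(\alpha\id_{\G}+B)LS^{\gamma}z^*\|$ reduces to the constant $\|(\alpha\id_{\G}+B)Lz^*\|$, and that $1/(1+\gamma_n\rho)$ is bounded on $[\varepsilon,\chi-\varepsilon]$. One then obtains $\varpi_n(z^*)=c_1\omega_n$ and $\eta_n(z^*)=c_2(z^*)\omega_n$, and the summability requirements of Lemma~\ref{lemma:QQn} follow from Assumption~\ref{assume:1}.\ref{eq:aproxassume1}.

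The main obstacle is the cluster-point hypothesis in Lemma~\ref{lemma:QQn}.\ref{lemma:QQn4}. From Lemma~\ref{lemma:QQn}.\ref{lemma:QQn3}, $\sum_n\phi^{\gamma_n}(z_n)<+\infty$, and since $\gamma_n\geq\varepsilon$ and $\kappa_K>0$ this yields $z_n-S^{\gamma_n}z_n\to 0$ strongly. Given a weakly convergent subsequence $z_{n_k}\weak\bar z$, set $x_{n_k}=S^{\gamma_{n_k}}z_{n_k}$, so that $x_{n_k}\weak\bar z$. The defining identity of the resolvent gives $\gamma_{n_k}^{-1}(z_{n_k}-x_{n_k})-Cz_{n_k}-D_K z_{n_k}\in A x_{n_k}$, hence after adding $C x_{n_k}+D_K x_{n_k}$ to both sides
\begin{equation*}
\tfrac{1}{\gamma_{n_k}}(z_{n_k}-x_{n_k})+(Cx_{n_k}-Cz_{n_k})+(D_K x_{n_k}-D_K z_{n_k})\in (A+C+D_K)x_{n_k},
\end{equation*}
where the left-hand side converges strongly to $0$ using $\gamma_{n_k}\geq\varepsilon$ together with the Lipschitzianity of $C$ (from its cocoercivity) and of $D_K$. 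The maximal monotonicity of $A+C+D_K$ (Proposition~\ref{prop:MP}.\ref{prop:MP2}), i.e.\ the sequential weak--strong closedness of its graph, then forces $0\in(A+C+D_K)\bar z$, so $\bar z\in S$. Lemma~\ref{lemma:QQn}.\ref{lemma:QQn4} closes (i).

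For (ii), $\hat\rho>0$ makes $S=\{z^*\}$ a singleton and, by Proposition~\ref{prop:BD}.\ref{prop:BD4}, each $T^{\gamma}$ is a strict contraction towards $z^*$ with squared factor $\vartheta=1-\varepsilon\min\{\kappa_K^2\varepsilon/2,\hat\rho\}\in(0,1)$. Under $\omega_n=\omega_0\overline\eta^{\,n}$, the estimate Proposition~\ref{prop:desop}.\ref{prop:desop5} takes exactly the form \eqref{eq:lem2} required by Lemma~\ref{lemma:QQnlin}, with $\varpi_n(z^*)\to 0$ and $\eta_n(z^*)=\eta_0(z^*)\,\overline\eta^{\,n}$; applying Lemma~\ref{lemma:QQnlin} yields linear convergence of $(z_n)_{n\in\N}$ to $z^*$.
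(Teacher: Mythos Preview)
Your proof is correct and follows essentially the same route as the paper: recast the iteration as $z_{n+1}=T_n^{\gamma_n}z_n$, feed Propositions~\ref{prop:BD}.\ref{prop:BD3} and \ref{prop:desop}.\ref{prop:desop5} into Lemma~\ref{lemma:QQn} for part~(i), and Propositions~\ref{prop:BD}.\ref{prop:BD4} and \ref{prop:desop}.\ref{prop:desop5} into Lemma~\ref{lemma:QQnlin} for part~(ii). Your use of the unperturbed $S^{\gamma_{n_k}}z_{n_k}$ in the graph-closure step, together with the observation $S^{\gamma}z^*=z^*$, are mild simplifications relative to the paper, which instead tracks the algorithm's $x_n=S_n^{\gamma_n}z_n$ and handles the extra $D_{K_n}$--$D_K$ discrepancy explicitly.
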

		\begin{proof}
			Let $z^* \in \zer (A+C+D_K)$ and, for every $\gamma \in 
			[\varepsilon,{\chi}-\varepsilon]$, consider the operators 
			$S^\gamma$, $T^\gamma$ and 
			$(S_n^\gamma)_{n\in \N}$, 
			$(T_n^\gamma)_{n\in \N}$,  
			defined in 
			\eqref{eq:defOp} and \eqref{eq:defOpn}, respectively. Then, 
			\eqref{e:algon} can be reexpressed as
			\begin{equation}\label{eq:defalgO}
				(\forall n \in \N) \quad x_n = S_n^{\gamma_n}z_n 
				\text{ and } z_{n+1}=T_n^{\gamma_n}z_n.
			\end{equation}
			\begin{enumerate}
				\item 
				In view of 
				Proposition~\ref{prop:BD}.\ref{prop:BD3}, 
				Proposition~\ref{prop:desop}.\ref{prop:desop5}, 
				and Lemma~\ref{lemma:QQn} 
				applied to 
				$I=[\varepsilon,{\chi}-\varepsilon]$,
				$S= \zer(A+C+D_K)$, 
				$Q^{\gamma}=T^{\gamma}$, 
				$\phi^{\gamma}: z 
				\mapsto \kappa_K^2\varepsilon^2\|z-S^{\gamma} z\|^2$, and
				\begin{equation}
					(\forall n \in \N) \quad
					\begin{cases}
						Q^{\gamma}_n=T_n^{\gamma}\\
						\varpi_n(z^*) = 
						\omega_n \upsilon\theta_4\\
						\eta_n(z^*) = \omega_n\upsilon(\theta'_4\|(\alpha 
						\id_{\G} + B)LS^{\gamma} z^* \| 
						+\theta''_4\|(\alpha \id_{\G} + B)Lz^*\|)
					\end{cases}
				\end{equation}
				with 
				\begin{equation}
					\upsilon = 
					\begin{cases}
						1 & \mbox{if $\rho \ge 0$}\\
						\frac{1}{1+\rho(\chi-\varepsilon)}
						& \mbox{if $\rho < 0$,}
					\end{cases}
				\end{equation} 
				$(\|z_{n}-z^*\|)_{n \in \N}$ 
				is convergent, $\sum_{n \in \N} 
				\|T_n^{\gamma_n}z_n-T^{\gamma_n}z_n\| < 
				+\infty$, and $\sum_{n\in \N} \|z_n-S^{\gamma_n} 
				z_{n}\|^2<+\infty$. Moreover, by 
				\eqref{eq:defalgO} and 
				Proposition~\ref{prop:desop}.\ref{prop:desop1}\&\ref{prop:desop2}	we 	obtain
				\begin{align*}
					(\forall n \in \N)\ \ 
					\|z_n-x_n\| &= \|z_n-S^{\gamma_n}z_n + 
					S^{\gamma_n}z_n 
					-S_n^{\gamma_n}z_n\|\\
					&\leq \|z_n-S^{\gamma_n}z_n\| + 
					\omega_n(\theta_1\|z_n-z^*\|
					+\|(\alpha\id_{\G} + B)Lz^*\|)\\
					&\leq \|z_n-S^{\gamma_n}z_n\| + 
					\omega_n(\theta_1\delta_z
					+\|(\alpha\id_{\G} + B)Lz^*\|),
				\end{align*}
				where 
				\begin{equation}\label{e:defdelta}
					\delta_z = \sup_{n\in \N} \|z_n-z^*\|<+\infty.
				\end{equation}
				
				Therefore 
				\begin{equation}\label{eq:zxto0}
					z_n-x_n\to 0.
				\end{equation}
				Furthermore, by  
				Proposition~\ref{prop:desop}.\ref{prop:desop1} and 
				the Lipschitzianity 
				of $D_K$, we have
				\begin{align*}
					\|D_{K_n}z_n-D_K x_n\| &\leq \|D_{K_n}z_n-D_K 
					z_n\|+\|D_K z_n-D_K x_n\|\\
					&\leq \omega_n(\theta_1\|z_n-z^*\|
					+\|(\alpha\id_{\G} + B)Lz^*\|)+\kappa_K \|z_n-x_n\|,
				\end{align*}
				hence
				\begin{equation}\label{eq:Bzxto0}
					D_{K_n} z_n-D_K x_n \to 0.
				\end{equation}
				Now, let $\overline{z}$ be a weak cluster point of 
				$(z_n)_{n \in \N}$ 
				and 
				let $(z_{k_n})_{n\in \N}$ be a subsequence such that 
				$z_{k_n} \weak 
				\overline{z}$. 
				It follows from \eqref{eq:zxto0} that 
				$z_{k_n}-x_{{k_n}}\to 0$ 
				and that $x_{k_n} \weak \overline{z}$ and from 
				\eqref{eq:Bzxto0} that 
				$D_{K_{k_n}} z_{k_n}-D_K x_{{k_n}}\to 0$. Moreover, the 
				cocoercivity of 
				$C$ yields $Cz_{k_n}-Cx_{k_n} \to 0$. 
				In addition, for every $n\in \N$,
				\begin{align}
					x_{k_n} = S_{k_n}^{\gamma_{k_n}}z_{k_n}\nonumber
					&\Leftrightarrow \quad 
					\frac{z_{k_n}-x_{k_n}}{\gamma_{k_n}}-(C+D_{K_{k_n}}) 
					z_{k_n}
					\in A x_{k_n}\nonumber\\
					&\Leftrightarrow \quad 
					\frac{z_{k_n}-x_{{k_n}}}{\gamma_{k_n}}- 
					(Cz_{k_n}-Cx_{{k_n}})-(D_{K_{k_n}} 
					z_{k_n}-D_Kx_{{k_n}}) \label{eq:limitsto0}\\
					& \hspace{6cm}\in 
					(A+C+D_K)x_{{k_n}}.\nonumber
				\end{align}
				Since $\{\gamma_n\}_{n 
					\in 
					\N} \subset [\varepsilon,{\chi}-\varepsilon]$, the 
				left-hand side converges strongly to 0 as $n\to 
				+\infty$.
				By the weak-strong closure of the maximally monotone
				operator $A+C+D_K$ (see 
				Proposition~\ref{prop:MP}.\ref{prop:MP3} \& 
				\cite[Proposition 20.38]{bauschkebook2017}), we conclude 
				that 
				$\overline{z} \in \zer(A+C+D_K)$. Finally, the weak 
				convergence of 
				$(z_n)_{n \in \N}$ to an element in  $\zer(A+C+D_K)$, 
				follows 
				from Lemma~\ref{lemma:QQn}.\ref{lemma:QQn4}. 
				\item The result follows from 
				Proposition~\ref{prop:BD}.\ref{prop:BD4}
				and Lemma~\ref{lemma:QQnlin} with
				$I=[\varepsilon,{\chi}-\varepsilon]$,
				$S= \zer(A+C+D_K)$, 
				$Q^\gamma = T^{\gamma}$, and
				\begin{align*}
					&\vartheta = 
					\sqrt{1-\varepsilon\min\{\kappa_K^2\varepsilon/2, 
						\hat{\rho}\}}\\
					& (\forall n\in \N)\quad Q_n^\gamma = T_n^\gamma\\
					&   (\forall n\in \N)\quad
					\varpi_n(z^*) = 
					\omega_n\upsilon \theta_4\\
					&\eta_0(z^*) = \omega_0\upsilon (\theta'_4\|(\alpha 
					\id_{\G} + B)LS^{\gamma} z^* \| 
					+\theta''_4\|(\alpha \id_{\G} + B)Lz^*\|).
				\end{align*}
			\end{enumerate}
		\end{proof}
		%
		
		\section{Forward-Douglas–Rachford-Forward Splitting}
		\label{se:FDRF}
		We will now turn our attention to the following algorithm.
		\begin{algo}\label{algo:RV} In the context of 
			Problem~\ref{prob:problem1}, let $\gamma \in \RPP$ be such 
			that $\gamma \rho > -1$, let
			$z_0 \in \H$, and 
			consider
			the iteration
			\begin{equation}
				\label{e:algonRV}
				(\forall n \in \N) \quad \left\lfloor
				\begin{aligned}
					&x_{n} =  J_{\gamma C}z_n\\
					&w_{n} =  D_{K_n} x_{n}\\
					&y_{n} =  J_{\gamma A}(2x_{n}-z_n-\gamma w_{n})\\
					&z_{n+1} = z_n + y_{n} - x_{n}-\gamma(D_{K_n}y_{n} - 
					w_{n}).
				\end{aligned}
				\right.
			\end{equation} 
		\end{algo}
		\begin{notation}
			In the context of Problem~\ref{prob:problem1}, for every 
			$\gamma 
			\in 
			\RP$ such that $\gamma \rho > -1$, define the 
			operators
			\begin{equation}\label{eq:defOpRV}
				R^\gamma=J_{\gamma 
					A} (2J_{\gamma C} -\id_{\H} -\gamma D_{K}  
				J_{\gamma C} ), 
				\quad 
				V^\gamma= 
				(\id_{\H}-\gamma D_{K}) R^\gamma  + \id_{\H} - 
				(\id_{\H}-\gamma D_{K})
				J_{\gamma 
					C}
			\end{equation}
			and, for every $n \in \N$,
			\begin{equation}\label{eq:defOpnRV}
				R_n^\gamma=J_{\gamma 
					A} (2J_{\gamma C} -\id_{\H} -\gamma D_{K_n}
				J_{\gamma C} ), \ \ 
				V^\gamma_n= 
				(\id_{\H}-\gamma D_{K_n}) R_n^\gamma  + \id_{\H} - 
				(\id_{\H}-\gamma D_{K_n}) 
				J_{\gamma C}.
			\end{equation}
			Additionally, define the set
			\begin{equation}
				\Gamma=\left\{\gamma \in \RPP
				\, \bigg|\,
				\kappa_{K}^2\gamma^2\left(
				1+\frac{\gamma}{2\beta}\right) <1\;
				\textnormal{and}\; \rho\gamma > -1
				\right\}.
			\end{equation}
		\end{notation}
		Note that $\Gamma \neq \varnothing$ since the involved conditions 
		are always satisfied for $\gamma$ small enough.

\begin{prop} \label{prop:RV}
	In the context of Problem~\ref{prob:problem1} and  
	Assumption~\ref{assume:1}, 
	let $\gamma \in \Gamma$, 
	let $\varepsilon_2 \in\, \RPP$ be such that 
	\begin{equation}\label{e:condesp2}
		\varepsilon_2 < 
		\frac{1-\kappa_K^2\gamma^2\Big(1+\frac{\gamma}{2\beta}\Big)}{1-\kappa^2_K\gamma^2},
	\end{equation}
	and  set $\varepsilon_1= 
	1-\kappa^{2}_K\gamma^2(1+\gamma/(2\beta(1-\varepsilon_2)))$.
	Then, the following assertions hold:
	\begin{enumerate}
		\item \label{prop:RV1} $ \zer (A+C+D_K) 
		= J_{\gamma C}(\fix 
		V^{\gamma})$.
		\item \label{prop:RV2}  For 
		every $z \in \H$ and every 
		$z^* 
		\in \fix V^{\gamma}$		
		\begin{align}\label{eq:propRV3}
			\|V^{\gamma} z- z^*\|^2 \leq  \|z- 
			z^*\|^2 
			-\varepsilon_1\|&J_{\gamma C} z-R^\gamma 
			z\|^2\\
			&-\frac{2\beta\varepsilon_2}{\gamma}\|J_{\gamma 
				C} 
			z-z+z^*-J_{\gamma C} z^*\|^2\nonumber.
		\end{align}
		\item \label{prop:RV4}  Suppose that 
		$\hat{\rho}
		>0$. 
		Then, for every 
		$z \in \H$ and every $z^* 
		\in \fix T^{\gamma}$, we have		
		\begin{align}\label{eq:propRV4}
			\|V^{\gamma} z- z^*\| \leq & 
			\sqrt{1-\frac{1}{3}\min\left\{\frac{2\beta\varepsilon_2}{\gamma},\varepsilon_1,2\gamma
				\hat{\rho}\right\}}
			\|z- z^*\|.
		\end{align}
	\end{enumerate}
\end{prop}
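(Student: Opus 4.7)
The plan is to mirror the structure of Proposition~\ref{prop:BD}, adapting the monotonicity calculations to the Douglas--Rachford geometry of $V^{\gamma}$. For part~\ref{prop:RV1}, the key observation is that $\gamma \in \Gamma$ forces $\gamma\kappa_K < 1$, so $\id_{\H} - \gamma D_K$ is injective. The fixed-point equation $V^{\gamma}z^{*} = z^{*}$ therefore collapses to $(\id_\H - \gamma D_K)R^\gamma z^* = (\id_\H -\gamma D_K)J_{\gamma C}z^*$, hence $R^{\gamma}z^{*} = J_{\gamma C}z^{*} =: x^{*}$. Unwinding the two resolvent inclusions yields simultaneously $z^{*} - x^{*} = \gamma Cx^{*}$ and $x^{*} - z^{*} - \gamma D_K x^{*} \in \gamma A x^{*}$, which combine to $0 \in Ax^{*} + Cx^{*} + D_K x^{*}$. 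The converse follows by taking $z^{*} = x^{*} + \gamma Cx^{*}$ for $x^{*} \in \zer(A+C+D_K)$ and checking that $R^{\gamma}z^{*} = x^{*} = J_{\gamma C}z^{*}$, whence $V^{\gamma}z^{*} = z^{*}$.

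For part~\ref{prop:RV2}, set $x = J_{\gamma C}z$, $y = R^{\gamma}z$ and $x^{*} = J_{\gamma C}z^{*}$. Using $z - x = \gamma Cx$ and $z^{*} - x^{*} = \gamma Cx^{*}$, the identity $V^{\gamma}z - z^{*} = (y - x^{*}) + \gamma(Cx - Cx^{*}) - \gamma(D_K y - D_K x)$ expands into a sum containing $\|y-x^{*}\|^{2}$, the squared norm of the $\gamma$-correction, and two cross products in $\scal{y-x^{*}}{\cdot}$. The key mechanism is that the monotonicity of $A + D_K$ (Proposition~\ref{prop:MP}.\ref{prop:MP1}), applied to $(y,\,(x-y)/\gamma - Cx - D_K x + D_K y) \in \gra(A+D_K)$ and $(x^{*},\,-Cx^{*}) \in \gra(A+D_K)$, yields
\begin{equation*}
\|y-x^{*}\|^{2} \leq \|x - x^{*}\|^{2} - \|y-x\|^{2} - 2\gamma\scal{y-x^{*}}{Cx - Cx^{*}} + 2\gamma\scal{y-x^{*}}{D_K y - D_K x},
\end{equation*}
whose two cross terms exactly annihilate those in the expansion. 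Inserting the cocoercivity estimate $\|x-x^{*}\|^{2} + (2\gamma\beta + \gamma^{2})\|Cx - Cx^{*}\|^{2} \leq \|z - z^{*}\|^{2}$ (from the Pythagorean identity around $J_{\gamma C}$ combined with $\beta$-cocoercivity of $C$), then splitting the residual cross term $-2\gamma^{2}\scal{Cx - Cx^{*}}{D_K y - D_K x}$ via Young's inequality with parameter $t > 0$ and bounding $\|D_K y - D_K x\|^{2} \leq \kappa_K^{2}\|y-x\|^{2}$, one arrives at
\begin{equation*}
\|V^{\gamma}z - z^{*}\|^{2} \leq \|z-z^{*}\|^{2} - (2\gamma\beta - \gamma^{2}t)\|Cx - Cx^{*}\|^{2} - \bigl(1 - \gamma^{2}\kappa_K^{2}(1 + 1/t)\bigr)\|y - x\|^{2}.
\end{equation*}
Calibrating $t = 2\beta(1-\varepsilon_{2})/\gamma$ matches these coefficients to $2\gamma\beta\varepsilon_{2}$ and $\varepsilon_{1}$ respectively, and hypothesis~\eqref{e:condesp2} is precisely the algebraic condition ensuring $\varepsilon_{1} > 0$. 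The identifications $\|x - y\| = \|J_{\gamma C}z - R^{\gamma}z\|$ and $\gamma\|Cx - Cx^{*}\| = \|J_{\gamma C}z - z + z^{*} - J_{\gamma C}z^{*}\|$ then yield~\eqref{eq:propRV3}.

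For part~\ref{prop:RV4}, invoking the $\hat{\rho}$-strong monotonicity of $A + D_K$ in place of mere monotonicity in the step above adds a term $-2\gamma\hat{\rho}\|y - x^{*}\|^{2}$ to the right-hand side of~\eqref{eq:propRV3}. The decomposition $z - z^{*} = (y - x^{*}) - (y - x) + \gamma(Cx - Cx^{*})$ together with the elementary inequality $\|a + b + c\|^{2} \leq 3(\|a\|^{2} + \|b\|^{2} + \|c\|^{2})$ gives $\|z - z^{*}\|^{2} \leq 3(\|y - x^{*}\|^{2} + \|y - x\|^{2} + \gamma^{2}\|Cx - Cx^{*}\|^{2})$, so the three penalty terms collectively dominate $\tfrac{1}{3}\min\{2\beta\varepsilon_{2}/\gamma,\,\varepsilon_{1},\,2\gamma\hat{\rho}\}\|z - z^{*}\|^{2}$, which produces~\eqref{eq:propRV4}. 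The main obstacle lies in part~\ref{prop:RV2}: one must engineer the exact cancellation of the cross-terms coming from the monotonicity inequality against those in the expansion of $\|V^\gamma z - z^*\|^2$, and then calibrate the Young parameter $t$ so that the surviving coefficients match the prescribed $\varepsilon_{1}, \varepsilon_{2}$; everything else is routine bookkeeping.
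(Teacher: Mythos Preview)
Your proposal is correct and follows essentially the same approach as the paper. The only notational difference is that the paper packages $\gamma(Cx-Cx^{*})$ into an auxiliary vector $u=x-z+z^{*}-x^{*}$ and invokes \cite[Lemma~3.2]{RyuVu2020} for the cocoercivity estimate, whereas you work directly with $Cx-Cx^{*}$ and derive the same bound by expanding $\|z-z^{*}\|^{2}=\|x-x^{*}+\gamma(Cx-Cx^{*})\|^{2}$; for part~\ref{prop:RV1} the paper simply cites \cite[Lemma~4.1]{RyuVu2020}, while you supply the short injectivity argument for $\id_\H-\gamma D_K$ explicitly.
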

\begin{proof}\ 
	\begin{enumerate}
		\item See
		\cite[Lemma~4.1]{RyuVu2020}.
		\item \label{prop:RV2proof}	Let $z \in \H$ and	set $x=J_{\gamma 
			C} z$, $y= R^\gamma z$. Then, $2x-z-\gamma D_K x-y + 
		\gamma D_K y \in \gamma (A+D_K)y$.
		Let $z^* 
		\in \fix V^\gamma$ and set $(x^*,u)=(J_{\gamma C} 
		z^*,x-z+z^*-x^*)$.  Since $x^* 
		\in 
		\zer(A+C+D_K)$,  $x^*-z^* \in \gamma 
		(A+D_K)x^*$. From the monotonicity of $A+D_K$ established in 
		Proposition~\ref{prop:MP}.\ref{prop:MP1}, we deduce that
		\begin{align*}
			0&\leq  \scal{y-x^*}{2x-z-\gamma D_K x-y + 
				\gamma D_K y-x^*+z^*}\\
			&=  \scal{y-x^*}{x-\gamma D_Kx-y + 
				\gamma D_Ky}+\scal{y-x^*}{u}.
		\end{align*}
		Hence
		\begin{align}\label{eq:RV2proof}
			2\gamma \scal{y-x^*}{ D_Kx-D_Ky}	
			&\leq 2\scal{y-x^*}{x-y}+2\scal{y-x^*}{u}\nonumber\\
			&=\|x-x^*\|^2-\|y-x^*\|^2-\|x-y\|^2\\
			&\hspace{3cm}  +2\scal{y-x^*}{u}.\nonumber
		\end{align}
		We have then
		\begin{align}
			\|V^\gamma z - z^*\|^2&=
			\| (\id_{\H}-\gamma D_{K})y+z-
			(\id_{\H}-\gamma D_{K}) x-z^*\|^2\nonumber\\
			&= \| y-x^*+\gamma (D_{K}x-D_{K}y)-u\|^2
			\nonumber\\
			& \le \|x-x^*\|^2-\|x-y\|^2
			+\gamma^2 \|D_{K}x-D_{K}y\|^2 \nonumber \\
			& 
			\qquad - 2 \gamma \scal{D_{K}x-D_{K}y}{u} + \|u\|^2.
			\label{e:preVgazzstar}
		\end{align}
		As $C$ is $\beta$-cocoercive, it follows from 
		\cite[Lemma~3.2]{RyuVu2020} that
		\begin{equation}
			\|x-x^*\|^2=\|J_{\gamma C}z - J_{\gamma C} z^*\|^2
			\le \|z-z^*\|^2- \left(1+\frac{2\beta}{\gamma}\right) \|u\|^2.
		\end{equation}
		We deduce from this inequality and \eqref{e:preVgazzstar} that
		\begin{align}
			\|V^\gamma z - z^*\|^2
			& \le \|z-z^*\|^2-\|x-y\|^2
			+\gamma^2 \|D_{K}x-D_{K}y\|^2
			\nonumber\\
			&\hspace{4cm}- 2 \gamma \scal{D_{K}x-D_{K}y}{u} -\frac{2\beta}{\gamma} 
			\|u\|^2\nonumber\\
			& \le \|z-z^*\|^2-\|x-y\|^2-\frac{2\beta\varepsilon_2}{\gamma}
			\|u\|^2
			\\
			&\hspace{3cm}+\gamma^2\left(1+\frac{\gamma}{2\beta(1-\varepsilon_2)}
			\right)\|D_{K}x-D_{K}y\|^2.\nonumber
		\end{align}
		By using the fact that $D_K$ is $\kappa_K$-Lispchitzian, we get
		\begin{align}
			&\|V^\gamma z - z^*\|^2\nonumber\\
			& \le \|z-z^*\|^2-
			\left(1-\kappa_K^2\gamma^2\Big(1+\frac{\gamma}{2\beta(1-\varepsilon_2)}
			\Big)\right)\|x-y\|^2
			-\frac{2\beta\varepsilon_2}{\gamma}
			\|u\|^2,
		\end{align}
		which yields \eqref{eq:propRV3}.
		Condition \eqref{e:condesp2} can be satisfied since $\gamma \in 
		\Gamma$ and it guarantees that $\varepsilon_1> 0$.
		\item 
		By Proposition~\ref{prop:MP}.\ref{prop:MP2}, $A+D_K$ 
		is 
		strongly monotone. Hence, similarly to \ref{prop:RV2proof}, we 
		can show that
		\begin{align}\label{eq:RV3proof}
			2\gamma \hat{\rho}\|y-x^*\| + &2\gamma\scal{y-x^*}{ Dx- 
				Dy}	\nonumber\\
			&\leq\|x-x^*\|^2-\|y-x^*\|^2-\|x-y\|^2+2\scal{y-x^*}{u}.
		\end{align}
		and
		\begin{align}
			\|V^\gamma z- z^*\|^2
			&\leq  \|z- 
			z^*\|^2 
			-\varepsilon_1\|x-y\|^2-
			\frac{2\beta\varepsilon_2}{\gamma}\|x-z+z^*-x^*\|^2\nonumber\\
			&\hspace{7cm}          -2\gamma\hat{\rho}\|y-x^*\|^2\nonumber\\
			&\leq   \|z- 
			z^*\|^2 
			-\min\left\lbrace\frac{2\beta\varepsilon_2}{\gamma},\varepsilon_1,2\gamma
			\hat{\rho}\right\rbrace(\|x-y\|^2+\|y-x^*\|^2\nonumber\\
			&\hspace{6cm} +\|x-z+z^*-x^*\|^2)\nonumber\\
			&\leq   \|z- 
			z^*\|^2 - 
			\frac{1}{3}\min\left\lbrace\frac{2\beta\varepsilon_2}{\gamma},\varepsilon_1,2\gamma
			\hat{\rho}\right\rbrace\|z-z^*\|^2.
		\end{align}
	\end{enumerate}
\end{proof}

\begin{prop}\label{prop:desopRV}
	Consider the operators 
	defined by 
	\eqref{eq:defOpRV} and \eqref{eq:defOpnRV}.
	Let $\gamma \in \Gamma$.
	Then, there exists 
	$(\lambda_1,\lambda_2,\lambda_3,\lambda'_3,\lambda_4,\lambda'_4)\in 
	\RPP^6$ such that,
	for every 
	$(z,z^*) \in 
	\H^2$, for every $\gamma \in \Gamma$,
	and for every $n\in \N$,
	the following inequalities hold:
	\begin{enumerate}
		\item \label{prop:desop1RV}  
			$\displaystyle\|R_n^{\gamma} z - R^{\gamma}z\|\leq 
			\omega_n\left(\lambda_1\|z-z^*\|
			+\frac{\gamma}{1+\rho \gamma}\|(\alpha \id_{\G} + 
			B)LJ_{\gamma C}z^*\|\right)$
		\item\label{prop:desop2RV}	 
			$\displaystyle 
			\|R_n^\gamma z-R^\gamma z^*\| \leq\lambda_2\|z-z^*\| 
			+\frac{\omega_n\gamma}{1+\rho\gamma} \|(\alpha \id_{\G} + 
			B)LJ_{\gamma C}z^*\|$
		\item \label{prop:desop3RV} 
			$ \|D_{K_n}R_n^\gamma z- D_K R^\gamma z \| \leq 
			\omega_n(\lambda_3\|z-z^*\|
			+\lambda'_3\|(\alpha \id_{\G} + B)LJ_{\gamma 
				C}z^*\|$\\
			\hspace*{\fill}$+\|(\alpha \id_{\G} + B)LR^\gamma z^*\|)$
		\item \label{prop:desop4RV} 
		$
		\|V_n^\gamma z- V^\gamma z\|\leq \omega_n(\lambda_4 \|z-z^*\|
		+\lambda'_4\|(\alpha \id_{\G} + B)LJ_{\gamma 
			C}z^*\|$\\
		\hspace*{\fill}$+\gamma\|(\alpha \id_{\G} + B)LR^{\gamma}z^*\|)$.
\end{enumerate}
\end{prop}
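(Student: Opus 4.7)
The plan is to establish the four bounds sequentially, in the spirit of the proof of Proposition~\ref{prop:desop}, so that each estimate may be built from the previous ones. Throughout, I will rely on four standard facts: the $(1+\gamma\rho)^{-1}$-Lipschitz continuity of $J_{\gamma A}$, the nonexpansiveness of $J_{\gamma C}$ (which holds since $C$ is monotone), the $\kappa_K$-Lipschitz continuity of $D_K$, and the pointwise mismatch bound of Proposition~\ref{prop:desop}.\ref{prop:desop1} on $\|D_{K_n} w - D_K w\|$ applied at suitably chosen base points $w$. As in the proof of Proposition~\ref{prop:desop}, I will also use $\overline{\omega} = \sup_{n\in\N}\omega_n < +\infty$ from Assumption~\ref{assume:1}.\ref{eq:aproxassume1}, together with the fact that $\Gamma$ is bounded so that $\gamma$ and $\gamma/(1+\rho\gamma)$ are uniformly bounded on $\Gamma$.

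For (i), I express $R_n^\gamma z - R^\gamma z$ as the difference of $J_{\gamma A}$ applied to two arguments that differ only by $-\gamma(D_{K_n} - D_K)J_{\gamma C}z$; the Lipschitz bound on $J_{\gamma A}$ then reduces the estimate to a control of $\|(D_{K_n} - D_K)J_{\gamma C}z\|$. Applying Proposition~\ref{prop:desop}.\ref{prop:desop1} at $w = J_{\gamma C}z$ with reference $w^* = J_{\gamma C}z^*$, and using the nonexpansiveness of $J_{\gamma C}$ to bound $\|J_{\gamma C}z - J_{\gamma C}z^*\|$ by $\|z-z^*\|$, yields (i), with $\lambda_1$ absorbing the uniformly bounded factor $\gamma\theta_1/(1+\gamma\rho)$.

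For (ii), I first estimate $\|R^\gamma z - R^\gamma z^*\|$ via the $(1+\gamma\rho)^{-1}$-Lipschitz continuity of $J_{\gamma A}$ together with the Lipschitz contributions of $2J_{\gamma C}$, $\id_{\H}$, and $\gamma D_K J_{\gamma C}$, yielding a bound of the form $(3+\gamma\kappa_K)(1+\gamma\rho)^{-1}\|z-z^*\|$; a triangle inequality with (i) then delivers (ii). Items (iii) and (iv) proceed by telescoping:
\begin{align*}
D_{K_n}R_n^\gamma z - D_K R^\gamma z &= (D_{K_n} - D_K)R_n^\gamma z + D_K(R_n^\gamma z - R^\gamma z),\\
V_n^\gamma z - V^\gamma z &= (R_n^\gamma z - R^\gamma z) - \gamma(D_{K_n}R_n^\gamma z - D_K R^\gamma z) + \gamma(D_{K_n} - D_K)J_{\gamma C}z.
\end{align*}
For (iii), the first piece is controlled by Proposition~\ref{prop:desop}.\ref{prop:desop1} at $w = R_n^\gamma z$ with reference $w^* = R^\gamma z^*$ and then bounded using (ii), while the second piece is $\kappa_K$-Lipschitz and bounded using (i). For (iv), the three pieces are controlled by (i), (iii), and Proposition~\ref{prop:desop}.\ref{prop:desop1} applied at $J_{\gamma C}z$ with reference $J_{\gamma C}z^*$ (again using the nonexpansiveness of $J_{\gamma C}$).

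The main bookkeeping obstacle is to absorb every quantity into constants $\lambda_1,\ldots,\lambda'_4$ that depend only on the problem parameters and on the admissible range of $\gamma$. In particular, the composition of two mismatch estimates that appears in (iii) produces an $\omega_n^2$ cross term; this is handled, as in the proof of Proposition~\ref{prop:desop}.\ref{prop:desop4}, by using $\omega_n \leq \overline{\omega}$ to reduce it to an $\omega_n$-factor, while the uniform boundedness of $\gamma$ and of $\gamma/(1+\rho\gamma)$ on $\Gamma$ ensures that all $\gamma$-dependent prefactors collapse to finite constants, giving the desired inequalities.
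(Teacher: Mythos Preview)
Your proposal is correct and follows essentially the same route as the paper: the same telescoping decompositions, the same use of the $(1+\gamma\rho)^{-1}$-Lipschitz property of $J_{\gamma A}$, of Proposition~\ref{prop:desop}.\ref{prop:desop1} at the points $J_{\gamma C}z$ and $R_n^\gamma z$, and the same $\omega_n^2\to\omega_n$ reduction via $\overline{\omega}$. One small difference worth noting: in part~(ii) the paper exploits the nonexpansiveness of the reflected resolvent $2J_{\gamma C}-\id_{\H}$ (so that the input map to $J_{\gamma A}$ is $(1+\gamma\kappa_K)$-Lipschitz), whereas you bound $2J_{\gamma C}$, $\id_{\H}$, and $\gamma D_K J_{\gamma C}$ separately to obtain the cruder factor $3+\gamma\kappa_K$; both are fine for an existence statement. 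Also, the paper simply fixes $\gamma\in\Gamma$ and lets the $\lambda_i$ depend on $\gamma$ (the ``for every $\gamma\in\Gamma$'' in the statement is redundant, since the proposition is only applied with a fixed stepsize in Theorem~\ref{teo:convergenciaRV}), so your effort to make the constants uniform over $\Gamma$ via boundedness of $\gamma/(1+\rho\gamma)$ is unnecessary---and would in fact require checking that $1+\rho\gamma$ is bounded away from zero on $\Gamma$, which is not guaranteed when $\rho<0$.
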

\begin{proof}	Recall that, in view of 
Assumption~\ref{assume:1}, 
$\overline{\omega}=\sup_{n\in \N} \omega_n <+\infty$. Let 
$(z,z^*)\in \H^2$.
\begin{enumerate}
	\item It follows from the $(1+\gamma \rho)^{-1}$-Lipschitzianity 
	of $J_{\gamma A}$, the nonexpansiveness of $J_{\gamma C}$, 
	and 	Proposition~\ref{prop:desop}.\ref{prop:desop1} that
	\begin{align}
		\|R_n^\gamma z - R^\gamma z\| &= \| J_{\gamma 
			A} (2J_{\gamma C} -\id_{\H} -\gamma D_{K_n} \circ 
		J_{\gamma C} 
		)z\nonumber\\
		&\hspace{4cm}-J_{\gamma 
			A} (2J_{\gamma C} -\id_{\H} -\gamma D_K \circ J_{\gamma 
			C} 
		)z\|\nonumber\\
		&\leq \frac{\gamma}{1+\rho\gamma}\|D_{K_n}J_{\gamma C}z - D_K 
		J_{\gamma C}z\|\nonumber\\
		& \leq 
		\frac{\gamma\omega_n}{1+\rho\gamma}(\theta_1\|J_{\gamma 
			C}z-J_{\gamma C}z^*\|
		+\|(\alpha \id_{\G} + B)LJ_{\gamma C}z^*\|)\nonumber\\
		& \leq \frac{\gamma\omega_n}{1+\rho\gamma}(\theta_1\|z-z^*\|
		+\|(\alpha \id_{\G} + B)LJ_{\gamma C}z^*\|).
	\end{align}
	The result follows by setting  
	$\lambda_1=\gamma\theta_1/(1+\rho \gamma)$,
	$\theta_1$ being given by \eqref{e:deftheta1}.
	\item Using the nonexpansiveness of $J_{\gamma A}$, the 
	Lipschitzianity of $D_K$, and the 
	nonexpansiveness of $2J_{\gamma C}-\id_H$  
	\cite[Corollary~23.11]{bauschkebook2017}, 
	we deduce from \ref{prop:desop1RV} that
	\begin{align}
		\|R_n^\gamma z&-R^\gamma z^*\|\nonumber\\
		&\leq  \| R^\gamma z-R^\gamma 
		z^*\|+\| R_n^\gamma z-R^\gamma z\|\nonumber\\
		&= \frac{1}{1+\gamma \rho}\| (2J_{\gamma C} -\id_{\H} -\gamma 
		D_K J_{\gamma C} 
		)z\nonumber\\
		&\hspace{3cm}-(2J_{\gamma 
			C} -\id_{\H} -\gamma D_K J_{\gamma C} )z^*\|+\| 
		R_n^\gamma z-R^\gamma 
		z\|\nonumber\\
		& \leq \Big(\frac{1+\gamma\kappa_K}{1+\gamma 
			\rho}+\omega_n\lambda_1\Big)\|z-z^*\| 
		+\frac{\omega_n\gamma}{1+\gamma \rho}\|(\alpha \id_{\G} + 
		B)LJ_{\gamma C}z^*\|.
	\end{align}
	We conclude by defining 
	\begin{equation}
		{\lambda_2=\overline{\omega}\lambda_1+\frac{\gamma\kappa_K+1}{1+\gamma
				\rho}}.
	\end{equation}
	\item It follows from \ref{prop:RV1}, \ref{prop:RV2}, and 
	Proposition~\ref{prop:desop}.\ref{prop:desop1} 
	that
	\begin{align}
		&\|D_{K_n}R_n^\gamma z- D_K R^\gamma z \|\nonumber\\
		&\leq \|D_{K_n}R_n^\gamma z- 
		D_K R_n^\gamma z\| + \|D_K R_n^\gamma z- D_K R^\gamma z 
		\|\nonumber\\
		&\leq \omega_n(\theta_1\|R_n^\gamma z-R^\gamma z^*\|
		+\|(\alpha \id_{\G} + B)LR^\gamma z^*\|) + 
		\kappa_K\|R_n^\gamma z-R^\gamma z\|\nonumber\\
		&\leq \omega_n\Big(
		(\theta_1\lambda_2+\kappa_K\lambda_1)
		\|z-z^*\|
		+\frac{\gamma(\omega_n\theta_1+\kappa_K)}{1+\gamma 
			\rho}\|(\alpha \id_{\G} + B)LJ_{\gamma 
			C}z^*\|\nonumber\\
		&\hspace{7cm}+
		\|(\alpha \id_{\G} + B)LR^\gamma z^*\|\Big).\nonumber
	\end{align}
	The result is obtained by defining 
	$\lambda_3 = (\theta_1\lambda_2+\kappa_K\lambda_1)$
	and  $\lambda'_3 = \gamma 
	(\overline{\omega}\lambda_1+\kappa_K)/(1+\gamma\rho)$.
	
	\item It follows from \ref{prop:desop1RV}, \ref{prop:desop3RV}, 
	Proposition~\ref{prop:desop}.\ref{prop:desop1}, and the 
	nonexpansiveness of $J_{\gamma C}$ that
	\begin{align}
		&\|V_n^\gamma z- V^\gamma z\| \nonumber\\
		&\leq \| R_n^\gamma z-R^\gamma 
		z\|+\gamma \|D_{K_n}R_n^\gamma z- D_K R^\gamma z\|+ 
		\gamma\|D_{K_n}J_{\gamma 
			C}z-D_K J_{\gamma C}z\|\nonumber\\
		&\leq 
		\omega_n\Bigg((\lambda_1+\gamma(\lambda_3+\theta_1))\|z-z^*\|
		\nonumber\\
		&\hspace{0.5cm}+\gamma
		{\Big(\lambda'_3+1+\frac{1}{1+\gamma \rho}\Big)}
		\|(\alpha \id_{\G} + B)LJ_{\gamma 
			C}z^*\|
		+\gamma\|(\alpha \id_{\G} + 
		B)LR^{\gamma}z^*\|\Bigg).\nonumber
	\end{align}
	This yields the sought inequality
	by defining
	{$\lambda_4 = \lambda_1+\gamma(\lambda_3+\theta_1)$
		and 
		\begin{equation}
			\lambda'_4= \lambda'_3+1+\frac{1}{1+\gamma \rho}.
		\end{equation}
	}
\end{enumerate}
\end{proof}

\begin{teo}\label{teo:convergenciaRV}
In the context of Problem~\ref{prob:problem1} and  
Assumption~\ref{assume:1}, let $\gamma \in \Gamma$, 
and consider
the sequences $(z_n)_{n \in \N}$ and $(x_n)_{n\in \N}$ generated by 
Algorithm~\ref{algo:RV}.
Then the following hold.
\begin{enumerate}
	\item  $(z_n)_{n\in\N}$ converges 
	weakly to some $\overline{z}\in	\fix V^{\gamma}$ and 
	$(x_n)_{n\in\N}$ converges weakly to $J_{\gamma 
		C}\overline{z} \in \zer(A+C+D_K)$.
	\item If $\hat{\rho}>0$ and there exists
	$\overline{\eta} \in [0,1[$ such that, for every $n\in \N$, 
	$\omega_n = \omega_0\, \overline{\eta}^n$, then $(z_n)_{n\in \N}$
	converges linearly to $\overline{z}\in	\fix V^{\gamma}$ and 
	$(x_n)_{n\in\N}$ converges linearly to $J_{\gamma 
		C}\overline{z}$, which is 
	the unique solution to
	Problem~\ref{prob:problem1}.
\end{enumerate}
\end{teo}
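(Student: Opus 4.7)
The plan is to mirror the proof of Theorem~\ref{teo:convergencia}. I begin by rewriting Algorithm~\ref{algo:RV} in the compact form
\begin{equation*}
	(\forall n\in\N)\quad x_n = J_{\gamma C}z_n,\quad y_n = R_n^{\gamma}z_n,\quad z_{n+1} = V_n^{\gamma}z_n,
\end{equation*}
so that the $z_n$-iteration is a perturbation of the map $V^{\gamma}$ governed by the mismatch error $\omega_n$.

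For Part~1, I would fix $z^*\in\fix V^{\gamma}$, set $x^* := J_{\gamma C}z^*$, and apply Lemma~\ref{lemma:QQn} with $I=\{\gamma\}$, $S=\fix V^{\gamma}$, $Q^{\gamma}=V^{\gamma}$, $Q_n^{\gamma}=V_n^{\gamma}$, and
\begin{equation*}
	\phi^{\gamma}(z) = \varepsilon_1\|J_{\gamma C}z - R^{\gamma}z\|^2 + \frac{2\beta\varepsilon_2}{\gamma}\|J_{\gamma C}z - z + z^* - x^*\|^2.
\end{equation*}
Condition \eqref{eq:lemQQn1} holds by Proposition~\ref{prop:RV}.\ref{prop:RV2}, and \eqref{eq:lem2} holds by Proposition~\ref{prop:desopRV}.\ref{prop:desop4RV} with summable $\varpi_n, \eta_n = O(\omega_n)$ (Lemma~\ref{lemma:QQn} remains valid when $\phi^{\gamma}$ depends on the anchor $z^*$, since its proof only uses $\phi^{\gamma}(z_n)$ for the fixed choice of $z^*$). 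This yields boundedness of $(z_n)$, summability $\sum_n\phi^{\gamma}(z_n) < +\infty$, and hence $J_{\gamma C}z_n - R^{\gamma}z_n\to 0$ and $x_n - z_n\to x^* - z^*$ strongly. Using $(z_n-x_n)/\gamma = Cx_n$ gives $Cx_n\to Cx^*$ strongly, and Proposition~\ref{prop:desopRV}.\ref{prop:desop1RV} gives $y_n - R^{\gamma}z_n\to 0$, so $y_n - x_n\to 0$ strongly.

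The main obstacle is showing that weak cluster points of $(z_n)$ lie in $\fix V^{\gamma}$, because $D_K$ is generally not weakly continuous and the weak convergence of $y_{k_n}$ cannot be transferred directly through $D_K$. Along a subsequence $z_{k_n}\weakly\overline z$, the relation $x_{k_n} - z_{k_n}\to x^* - z^*$ gives $x_{k_n}\weakly \overline x := \overline z + (x^* - z^*)$, and weak-strong closedness of the maximally monotone $C$ applied to $(x_{k_n}, Cx_{k_n})$ forces $C\overline x = Cx^*$, whence $\overline z - \overline x = \gamma C\overline x$, i.e., $\overline x = J_{\gamma C}\overline z$; similarly $y_{k_n}\weakly J_{\gamma C}\overline z$. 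From the defining inclusion $\gamma Ay_n \ni 2x_n - z_n - \gamma D_{K_n}x_n - y_n$, adding $\gamma(Cy_n + D_Ky_n)$ on both sides and using $Cx_n = (z_n-x_n)/\gamma$ yields
\begin{equation*}
	(A+C+D_K)y_n\ni (Cy_n - Cx_n) + \frac{x_n - y_n}{\gamma} + (D_Ky_n - D_Kx_n) + (D_Kx_n - D_{K_n}x_n),
\end{equation*}
whose right-hand side tends to $0$ strongly ($C$ and $D_K$ are Lipschitzian, $y_n - x_n\to 0$, $(x_n)$ is bounded, and $\|K - K_n\|\le\omega_n\to 0$). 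Weak-strong closedness of the maximally monotone $A+C+D_K$ (Proposition~\ref{prop:MP}.\ref{prop:MP2}) then gives $0\in(A+C+D_K)(J_{\gamma C}\overline z)$, hence $\overline z\in\fix V^{\gamma}$ by Proposition~\ref{prop:RV}.\ref{prop:RV1}. Lemma~\ref{lemma:QQn}.\ref{lemma:QQn4} yields $z_n\weakly \overline z\in\fix V^{\gamma}$; specializing $z^* = \overline z$ in the preceding paragraph gives $x_n\weakly J_{\gamma C}\overline z\in\zer(A+C+D_K)$.

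For Part~2, Proposition~\ref{prop:RV}.\ref{prop:RV4} supplies the contraction constant $\vartheta = \sqrt{1 - \tfrac{1}{3}\min\{2\beta\varepsilon_2/\gamma, \varepsilon_1, 2\gamma\hat\rho\}}\in[0,1[$. Combined with the error estimates of Proposition~\ref{prop:desopRV}.\ref{prop:desop4RV}, in which the additive term now decays geometrically since $\omega_n = \omega_0\overline\eta^n$, Lemma~\ref{lemma:QQnlin} delivers linear convergence of $(z_n)$ to the unique $\overline z\in\fix V^{\gamma}$. Nonexpansiveness of $J_{\gamma C}$ transfers the rate to $(x_n)$, which converges linearly to $J_{\gamma C}\overline z$, the unique element of $\zer(A+C+D_K)$.
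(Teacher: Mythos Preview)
Your proof is correct and follows essentially the same route as the paper's: rewrite the iteration as $z_{n+1}=V_n^\gamma z_n$, apply Lemma~\ref{lemma:QQn} with the Fej\'er-type estimate from Proposition~\ref{prop:RV}\ref{prop:RV2} and the perturbation bound from Proposition~\ref{prop:desopRV}\ref{prop:desop4RV}, then identify weak cluster points via the graph-closedness of $A+C+D_K$; Part~(ii) is identical to the paper's, combining Proposition~\ref{prop:RV}\ref{prop:RV4} with Lemma~\ref{lemma:QQnlin} and the nonexpansiveness of $J_{\gamma C}$.

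The one place where you diverge slightly is in concluding that a weak cluster point $\overline z$ lies in $\fix V^\gamma$. The paper, having shown $\overline x:=\overline z+x^*-z^*\in\zer(A+C+D_K)$, re-runs the Fej\'er analysis with the new anchor $x^*:=\overline x$ (and its associated fixed point), which forces $\overline z$ to equal that fixed point by uniqueness of the strong limit of $x_n-z_n$. You instead argue directly: from $Cx_{k_n}\to Cx^*$ and $x_{k_n}\weakly\overline x$, graph-closedness of the maximally monotone $C$ gives $C\overline x=Cx^*$, hence $\overline x=J_{\gamma C}\overline z$, and then Proposition~\ref{prop:RV}\ref{prop:RV1} yields $\overline z\in\fix V^\gamma$. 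This is valid and arguably more transparent; just note that your last step tacitly uses the injectivity of $J_{\gamma C}$ (immediate since $C$ is single-valued: $J_{\gamma C}z_1=J_{\gamma C}z_2=w$ gives $z_1=w+\gamma Cw=z_2$), without which $J_{\gamma C}\overline z\in\zer(A+C+D_K)=J_{\gamma C}(\fix V^\gamma)$ would not by itself place $\overline z$ in $\fix V^\gamma$.
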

\begin{proof}
Let $\gamma \in \Gamma$.
Consider 
the operators $R^\gamma$, $V^\gamma$ and $(R_n^\gamma)_{n\in \N}$, 
$(V_n^\gamma)_{n\in \N}$  
defined in 
\eqref{eq:defOpRV} and \eqref{eq:defOpnRV}, respectively. 
Let $x^* \in \zer (A+C+D_K)$.
According to Proposition~\ref{prop:RV}.\ref{prop:RV1}, there exists
$z^*\in \fix V^\gamma$ such that 
$x^*=J_{\gamma C}z^*$. Note that 
\eqref{e:algonRV} is equivalent to
\begin{equation}\label{eq:defalgORV}
	(\forall n \in \N) \quad y_n = R_n^{\gamma}z_n 
	\text{ and } z_{n+1}=V_n^{\gamma}z_n.
\end{equation}
\begin{enumerate}
	\item
	In view of Proposition~\ref{prop:RV}.\ref{prop:RV2} and 
	Proposition~\ref{prop:desopRV}.\ref{prop:desop4RV}, 
	Lemma~\ref{lemma:QQn} can be 
	applied to $I= \{\gamma\}$,
	$S = \fix V^\gamma$, $Q^{\gamma}=V^{\gamma}$, 
	$\phi^{\gamma}: z 
	\mapsto \varepsilon_1\|J_{\gamma C} z-R^\gamma 
	z\|^2+\frac{2\beta\varepsilon_2}{\gamma}\|J_{\gamma 
		C} 
	z-z+z^*-J_{\gamma C} z^*\|^2$, and
	\begin{align*}
		(\forall n &\in \N)\\ 
		&\begin{cases}
			Q^{\gamma}_n=V_n^{\gamma}\\
			\varpi_n(z^*)=\omega_n \lambda_4\\
			\eta_n(z^*) = \omega_n(\lambda'_4\|(\alpha \id_{\G} + 
			B)LJ_{\gamma 
				C}z^*\|+\gamma\|(\alpha \id_{\G} + 
			B)LR^{\gamma}z^*\|).
		\end{cases}
	\end{align*}
	This allows us to deduce that
	$(\|z_{n}-z^*\|)_{n \in \N}$ 
	is convergent, $\sum_{n \in \N} 
	\|V_n^{\gamma}z_n-V^{\gamma}z_n\| < 
	+\infty$, $\sum_{n\in \N} \|x_n-R^{\gamma} z_{n}\| < +\infty $, 
	and
	$\sum_{n\in \N} \|x_n-z_n-x^*+z^*\| < +\infty$.
	Moreover, according to \eqref{eq:defalgORV} and 
	Proposition~\ref{prop:desopRV}.\ref{prop:desop1RV},
	\begin{align*}
		\|x_n-y_n\| &= \|x_n-R^{\gamma}z_n + R^{\gamma}z_n 
		-R_n^{\gamma}z_n\|\\
		&\leq \|x_n-R^{\gamma}z_n\| + 
		\omega_n\left(\lambda_1\|z_n-z^*\|\frac{\gamma}{1+\gamma \rho}\|(\alpha \id_{\G} + 
		B)LJ_{\gamma C}z^*\|\right)\\
		&\leq \|x_n-R^{\gamma}z_n\| + \omega_n\left(\lambda_1\delta_z
		+\frac{\gamma}{1+\gamma \rho}\|(\alpha \id_{\G} + 
		B)LJ_{\gamma C}z^*\|\right),
	\end{align*}
	where $\delta_z$ is given by \eqref{e:defdelta}.
	Therefore,
	\begin{equation}\label{eq:zxto0RV}
		y_n-x_n\to 0
	\end{equation}
	and, it follows from the cocoercivity of $C$ and the Lipschitzian 
	property of $D$ that
	\begin{equation}\label{eq:Czxto0RV}
		Cy_{n}-Cx_{n} \to 0 \text{ and } D_K y_{n}-D_K x_{n} \to 0.
	\end{equation}
	Since $z_n-x_n=\gamma Cx_n$, we deduce that
	\begin{equation}\label{eq:Czxto0RV2}
		{\frac{z_{n}-x_{n}}{\gamma}}
		-Cy_{n} \to 0.
	\end{equation}
	Furthermore, by Proposition~\ref{prop:desop}.\ref{prop:desop1} and 
	the nonexpansiveness of $J_{\gamma C}$ we have
	\begin{align*}
		\|w_n-D_K y_n\|  &\leq 
		\|D_{K_n}x_n-D_Kx_n\|+\|D_Ky_n-D_Kx_n\|\\
		&\leq \omega_n(\theta_1\|z_n-z^*\|
		+\|(\alpha\id_{\G} + B)LJ_{\gamma C} z^*\|)+\kappa_K 
		\|y_n-x_n\|.
	\end{align*}
	Thus
	\begin{equation}\label{eq:Bzxto0RV}
		w_n-D_K y_n \to 0.
	\end{equation}
	Now, let $\overline{z}$ be a weak cluster point of $(z_n)_{n \in 
		\N}$ 
	and 
	let $(z_{k_n})_{n\in \N}$ be a subsequence such that $z_{k_n} 
	\weak 
	\overline{z}$.
	Since $x_{k_n}-z_{k_n} \to x^*-z^*$,
	\begin{equation}\label{e:weakxn}
		x_{k_n}\weak \overline{x} = \overline{z}+x^*-z^*.
	\end{equation}
	According to \eqref{eq:zxto0RV}, $x_{k_n}-y_{{k_n}}\to 0$, hence 
	that {$y_{k_n}\weak \overline{x}$}.
	It follows from \eqref{eq:Czxto0RV},
	\eqref{eq:Czxto0RV2}, and \eqref{eq:Bzxto0RV}
	that
	$Dx_{k_n}-Dy_{k_n}\to 0$, $(x_{k_n}-z_{{k_n}})/\gamma
	-Cy_{k_n} \to 0$, and $D_K y_{{k_n}}-w_{{k_n}} 
	\to 0$.
	Furthermore, from \eqref{e:algonRV},
	\begin{align}\label{eq:limitsto0RV}
		&\frac{2 x_{k_n}-z_{k_n}-y_{k_n}}{\gamma}- w_{k_n}\in A 
		y_{k_n}\nonumber\\
		\Leftrightarrow\;\; &	
		\frac{x_{k_n}-y_{{k_n}}}{\gamma}-\left({\frac{z_{k_n}-x_{k_n}}{\gamma}}
		-Cy_{k_n}\right)-(D_Kx_{k_n}-D_Ky_{k_n})\nonumber\\
		& \hspace{5cm}+(D_Kx_{k_n}-w_{k_n})\in 
		(A+C+D_K)y_{k_n}.\nonumber
	\end{align}
	Altogether, by the weak-strong closure of the maximally monotone
	operator $A+C+D_K$ (see Proposition~\ref{prop:MP}.\ref{prop:MP3} 
	\& 
	\cite[Proposition 20.38]{bauschkebook2017}), we conclude that 
	${\overline{x}} \in \zer(A+C+D_K)$. 
	We can thus choose $x^* = \overline{x}$
	and \eqref{e:weakxn} yields $\overline{z}
	= z^* = J_{\gamma C} \overline{x} \in \fix V^\gamma$.
	The weak convergence of $(z_n)_{n \in 
		\N}$ to $\overline{z}$ follows 
	from Lemma~\ref{lemma:QQn}.\ref{lemma:QQn4}.
	Finally $x_n-z_n \to \overline{x}-\overline{z}$
	$\Rightarrow$
	$x_n \weakly \overline{x}$.
	\item The linear convergence of $(z_n)_{n\in \N}$ to $z^*$ 
	follows from Proposition~\ref{prop:RV}.\ref{prop:RV2}
	and Lemma~\ref{lemma:QQnlin} with
	$I=\{\gamma\}$,
	$S = \fix V^\gamma$,
	$Q^\gamma = V^{\gamma}$, and
	\begin{align}
		&\vartheta = 
		\sqrt{1-{\frac{1}{3}}\min\left\{\frac{2\beta\varepsilon_2}{\gamma},\varepsilon_1,2\gamma
			\hat{\rho}\right\}}\\
		& (\forall n\in \N)\quad Q_n^\gamma = V_n^\gamma\\
		&   (\forall n\in \N)\quad
		\varpi_n(z^*)=\omega_n \lambda_4\\
		&\eta_0(z^*) = \omega_0(\lambda'_4\|(\alpha \id_{\G} + 
		B)LJ_{\gamma 
			C}z^*\|+\gamma\|(\alpha \id_{\G} + B)LR^{\gamma}z^*\|).
	\end{align}
	Since $J_{\gamma C}$ is nonexpansive,
	\[
	(\forall n \in \N)\quad \|x_n-x^*\| \le \|z_n -z^*\|.
	\]
	This shows that $(x_n)_{n\in \N}$ converges linearly to $x^*$, 
	which is the unique solution to Problem~\ref{prob:problem1}.
\end{enumerate}
\end{proof}

\section{Numerical Experiments} \label{se:numexp}
This section is devoted to illustrate our theoretical results, 
through numerical experiments on an image reconstruction problem 
arising in Computed Tomography (CT), in material science. 

\subsection{Problem formulation and settings}
In CT~\cite{Kak2001}, one aims at solving the inverse problem of 
retrieving an estimate of a sought image $\bar{x} \in \mathbb{R}^N$, 
with $N \geq 1$ pixels, from acquisitions
\begin{equation}\label{eq:model}
c = \mathcal{D}(L \bar{x}),
\end{equation}
where  $L \in \R^{N \times M}$ is a forward linear operator acting as 
a discretized Radon projector, $\mathcal{D}: \mathbb{R}^M \to 
\mathbb{R}^M$ models some noise perturbing the acquisitions, and $c 
\in \mathbb{R}^M$  is the noisy tomographic projection. We focus on 
the challenging situation when the back-projector matrix $L^\top 
\colon \R^M \to \R^N$ is approximated by $K \colon \R^M \to \R^N$. 
This is a current situation in practical CT reconstruction, where 
operator $L$ (and thus, its transpose) cannot be stored, for memory 
reasons. It is instead implemented as a function, which computes 
on-the-fly projection and back-projections operations, making use of 
fast operations involving advanced interpolation strategies 
\cite{Xu06}. The adjoint mismatch is thus inherent to this 
application~\cite{Hansen2022,Zeng2018} and, except in special 
simplistic cases, cannot be avoided. 

An efficient approach to retrieve an estimate $\bar{x}$ from $c$, 
$L$, and $K$, consists of minimizing
a penalized cost function, in the form of 
Problem~\ref{prob:problem0o}. However, as explained earlier, due to 
the adjoint mismatch, the formulation in Problem~\ref{prob:problem0o} 
is not well suited, and we propose instead to solve the following 
mismatched monotone inclusion:
\begin{equation}\label{eq:mmop}
\text{find } x \in \R^N \text{ such that } 0 \in \partial_{\rm F} 
f(x) + \nabla g (x) + \alpha K(Lx-c)+K\nabla h (Lx), 
\end{equation}
with $f\colon \R^N \to \R$ and $g \colon \R^N \to \R$ playing the 
role of regularization terms favoring a priori properties on the 
estimated image, and $h \circ L$ the data fidelity term accounting 
for the noise model. 
The latter inclusion problem reads as a particular instance of 
Problem~\ref{prob:problem1}, by setting $A = \partial_{\rm F} f $, $B 
= \nabla h $, and $C = \nabla g$ under suitable assumptions on the 
involved functions. In particular, we will choose $f$ and $g$ so that 
$\hat{\rho}>0$ and the inclusion in \eqref{eq:mmop} has a unique 
solution (Proposition~\ref{prop:MP}.\ref{prop:MP3}).

\textbf{Data fidelity term:} 
We consider a general mixed multiplicative/additive noise model, as 
discussed for instance in \cite{ChouzenouxPG}. The vector $c$ is 
related to $\bar{x}$ through
\begin{equation}
c = z + e,
\end{equation}
with $z | \bar{x} \sim \mathcal{P}(L \bar{x})$ (i.e., Poisson 
distribution with mean $L \bar{x}$) and $e \sim 
\mathcal{N}(0,\sigma^2 \mathrm{Id})$ (i.e., i.i.d. Gaussian 
distribution with zero-mean and variance $\sigma^2$). Such a noise 
model allows to both account for multiplicative noise typical from 
emission tomography scenarios, and additive noise coming from the 
sensors. As shown in \cite{Zanni2015,MarnissiTCI}, a suitable choice 
for the data fidelity term in such case is the Generalized Anscombe 
function, which is a smoothed approximation of the neg-log-likelihood 
associated to a Gauss-Poisson noise model. Under the assumption that 
$c = (c_m)_{1 \leq m \leq M} \in [- \frac{3}{8} - 
\sigma^2,+\infty[^M$ (which can be satisfied by basic cropping), 
function $h$ reads
\begin{equation}
(\forall y = (y_m)_{1\le m \le M} \in \R^M) \quad h(y) = \sum_{m=1}^M 
\varphi(y_m;c_m), 
\end{equation}
where, for every $a \in \R$, and every $b\in [- \frac{3}{8} - 
\sigma^2,+\infty[$,
\begin{equation}
\varphi(a;b) = 
\begin{cases}
	2 \left( \sqrt{b + \frac{3}{8} + \sigma^2} - \sqrt{a + 
		\frac{3}{8} + \sigma^2} \right)^2 &\text{if} \quad a \geq 0,\\
	\varphi(0;b) + \dot{\varphi}(0;b) a + \frac{1}{2} \nu(b) a^2 & 
	\text{otherwise},
\end{cases}
\end{equation}
with 
\begin{equation}
\left(\forall b \geq - \frac{3}{8} - \sigma^2\right) \quad \nu(b) = 
\left(\frac{3}{8} + \sigma^2\right)^{-\frac{3}{2}} \sqrt{\frac{3}{8} 
	+  b + \sigma^2}.
	\end{equation}
	Basic calculus shows that, for every $b \geq - \frac{3}{8} - 
	\sigma^2$, the derivative of
	$\varphi(\cdot;b)$ at $a \geq 0$ reads
	\begin{equation}
\dot{\varphi}(a;b) = 2 - \frac{2 \sqrt{8 b + 8 \sigma^2 + 3}}{\sqrt{8 
		a + 8 \sigma^2 + 3}}.
		\end{equation}
	Under this definition, we can readily show that, for every $b \geq - 
	\frac{3}{8} - \sigma^2$, $\dot{\varphi}(\cdot\, ; b)$ is Lipschitzian 
	on $\mathbb{R}$, with constant
	$\nu(b)$. Assuming that the observed data satisfies $c \in [- 
	\frac{3}{8} - \sigma^2,+\infty[^M$, we deduce  that $h $ is 
	$\zeta$-Lipschitz differentiable on $\mathbb{R}^N$ with
	\begin{equation}\label{eq:LC}
\zeta   = \max_{m \in \{1,\ldots,M\}} \nu(c_m).
\end{equation}

\textbf{Regularization terms:} 
Function $f$ imposes the range of the restored image and controls the 
image energy, and is defined as
\begin{equation}
(\forall x \in \R^N) \quad f(x) = \iota_{[0,x_{\max}]^N}(x) + 
\frac{\rho}{2} \| x \|^2
\end{equation}
with $\rho \in ]0,+\infty[$.
Function $f$ 
is $\rho$-strongly convex on $\R^N$. Its proximity operator has the 
following closed form expression:
\begin{equation}
(\forall \gamma\in \RPP) \quad \prox_{\gamma f}(x) = \min\{ \max \{ 
(\gamma \rho + 1)^{-1} x , 0 \} , x_{\max}\}.
\end{equation}
Function $g$ promotes sparsity of the image in a transformed domain 
defined by a linear operator $W \in \R^{N \times N}$:
\begin{equation}
(\forall x \in \R^N) \quad g(x) =  (\Phi_{\delta} \circ W) (x).
\end{equation}
Hereabove,  $\Phi_\delta$ is the Huber function defined, for 
$\delta>0$, as
\begin{equation}\label{eq:def_huber}
(\forall x = (x_i)_{1\leq i \leq N} \in \R^N) \quad \Phi_{\delta}(x) 
=  \sum_{i=1}^N \phi_{\delta}(x_i), \quad  
\end{equation}
with
\begin{equation}
(\forall \eta \in \R) \quad
\phi_{\delta} (\eta) = \begin{cases}
	|\eta|-\frac{\delta}{2}, & \text{ if } |\eta| > \delta,\\
	\frac{\eta^2}{2\delta}, & \text{ otherwise}.
\end{cases}
\end{equation}
Function $\Phi_\delta$ can be viewed as a smoothed approximation of 
the $\ell_1$ penalty, promoting the sparsity of its argument. 
Function $g$ belongs to $ \Gamma_0(\R^N)$. Moreover, 
the derivative of $\phi_\delta$ reads
\begin{align}
(\forall \eta \in \R) \quad \dot{\phi}_{\delta}(\eta) =
\begin{cases}
	\frac{|\eta|}{\eta}, & \text{ if } |\eta| > \delta,\\
	\frac{\eta}{\delta}, & \text{otherwise},
\end{cases}
\end{align}
which shows that $\Phi_{\delta}$ has $(1/\delta)$-Lipschitizian 
gradient. We set $W\in \R^{N \times N}$ as an orthonormal wavelet 
transform~\cite{PustelnikWavelet}, that leads to efficient penalties 
in tomography~\cite{Klann_2015,Loris2007}. Then $\|W\|=1$ and $g$ 
also has $(1/\delta)$-Lipschitizian gradient. Additionally, by 
orthogonality of $W$, \cite[Corollary~23.27]{bauschkebook2017} yields
\begin{equation}
(\forall \gamma \in \RPP) \quad \prox_{\gamma g} = W^\top \circ 
\prox_{\gamma \Phi_\delta} \circ W,
\end{equation}
with $W^\top = W^{-1}$, and, by 
\cite[Proposition~24.11]{bauschkebook2011},
\begin{align}
(\forall \gamma\in \RPP) (\forall x = (x_i)_{1\leq i \leq N} \in 
\R^N) \quad \prox_{\gamma \Phi_\delta}(x ) = \big(\prox_{\gamma 
	\phi_\delta}(x_i)\big)_{1\leq i \leq N},
	\end{align}
	with
	\begin{align}
(\forall \gamma \in \RPP) (\forall \eta \in \R) \quad \prox_{\gamma 
	\phi_\delta}(\eta) =  \begin{cases}
	\eta - \frac{\gamma|\eta|}{\eta}, & \text{ if } |\eta| > 
	\delta+\gamma,\\
	\frac{\delta\eta}{\gamma+\delta}, & \text{ if } |\eta| \leq 
	\delta+\gamma.
\end{cases}
\end{align}

\textbf{Algorithms implementation:}
We are now ready to apply Algorithm~\ref{algo:BAD} (MMFBHF)
and Algorithm~\ref{algo:RV} (MMFDRF) to solve Problem~\ref{prob:problem0o} (MM stands for 
MisMatched).
In the considered setting, the algorithms read as follows. 
\begin{algo}[MMFBHF]
Let $\gamma>0$, let $z_0 \in \mathbb{R}^N$, and consider the iteration
\begin{equation}
	(\forall n \in \mathbb{N}) \quad \left\lfloor
	\begin{aligned}
		&u_n =   K(\alpha \id  +\nabla h)( L z_{n})-\alpha K c\\
		&y_n = z_n-\gamma
		(\nabla g (z_n) + u_n)\\
		&x_n =\prox_{\gamma f}(y_n)\\
		&z_{n+1} = x_n + \gamma (u_n - 
		K(\alpha \id  +\nabla h)( L x_{n})+\alpha K c ).
	\end{aligned}
	\right.
\end{equation} 
\end{algo}
\begin{algo}[MMFDRF] Let $\gamma>0$, let $z_0 \in \mathbb{R}^N$, and 
consider the iteration
\begin{equation}
	(\forall n \in \mathbb{N}) \quad \left\lfloor
	\begin{aligned}
		&x_{n} =  \prox_{\gamma g}z_n\\
		&w_{n} =   K(\alpha \id  +\nabla h)( L x_{n})-\alpha K c\\
		&y_{n} =  \prox_{\gamma f}(2x_{n}-z_n-\gamma w_{n})\\
		&z_{n+1} = z_n + y_{n} - x_{n}-\gamma(K(\alpha \id  +\nabla 
		h)( L y_{n})-\alpha K c - 
		w_{n}).
	\end{aligned}
	\right.
\end{equation} 
\end{algo}

The projector $L$ is given by the line length ray-driven projector 
\cite{zengRayDriven93} and implemented in MATLAB using the line 
fan-beam projector provided by the ASTRA toolbox 
\cite{vanAarleASTRA2016,vanAarleASTRA2015}. Moreover, a constant 
mismatch, i.e., for every $n \in \N$, $K_n = K$, is considered, where 
the mismatched backprojector $K$ is the adjoint of the strip fan-beam 
projector from the ASTRA toolbox. 

In order to set up the stepsize parameters guaranteeing the 
convergence of our algorithms, we need to evaluate $\lambda_{\min}$, 
defined in \eqref{eq:deflmin}. To do so, we compute the eigenvalues 
of the operator $(KL + L^{\top} K^{\top})/2$ by using the function 
{\it eigs} from MATLAB, yielding $\lambda_{\min}\approx -6.0082$. 
Note that, it would also be possible to estimate $\lambda_{\min}$ 
avoiding an explicit implementation of $L^\top$ and $K^\top$ by the 
strategy proposed in \cite{UnmatchedDong2019}. In order to guarantee 
that Assumption~\ref{assume:1}.\ref{eq:desassume1} holds, we set 
$\rho=-\alpha\lambda_{\min}+\widetilde{\zeta}_{L^\top-K}+10^{-3}$, 
where $\widetilde{\zeta}_{L^\top-K}$ is estimated as 
$\|L^\top-K\|\|L\|\zeta$. The spectral norms $\|L^\top-K\|$ and 
$\|L\|$ are computed using the power iterative method. We implement 
MMFBHF with constant step-size $\gamma = 
({3.99{\beta}})/({1+\sqrt{1+16{\beta}^2
	\kappa_K^2}})$ and MMFDRF with $\gamma = 0.999\hat{\gamma}$, 
	where $\hat{\gamma}$ is the largest solution to the equation 
	$\kappa_K^2\hat{\gamma}^2(1+\hat{\gamma}/(2\beta)))=1$, computed 
	numerically.
	These choices allow satisfying our technical assumptions, so that 
	the convergence theorems hold. 
	
	\subsection{Experimental results}
	
	We now present our experimental results. 
	In the observation model \eqref{eq:model}, the ground truth image 
	$\overline{x}$ represents a part of a high resolution scan of a 
	phase-separated barium borosilicate glass imaged at the ESRF 
	synchrotron~\cite{ChouzenouxTomo}\footnote{https://www.esrf.fr/ - 
The dataset is a courtesy of David Bouttes.}. The image size is 
$N = 128 \times 128$ pixels. The projector $L$ describes a 
fan-beam geometry over 180$^o$ using $90$ regularly spaced 
angular steps. The source-to-object distance is $800$ mm, and the 
source-to-image distance is $1200$ mm. The bin grid is twice 
upsampled with respect to the pixel grid,  the detector has
$249$ bins of size $1.6$ mm, so that $M = 90 \times 249$. The 
pixel values of $\overline{x}$  outside a circle of diameter 
$128$ pixels are set to $0$, to guarantee that the object of 
interest lies within the field of view.  
\begin{figure}
\centering
\subfloat[\scriptsize Tomographic projection $L 
\bar{x}$]{\label{fig:sino_wav}\includegraphics[scale=0.22]{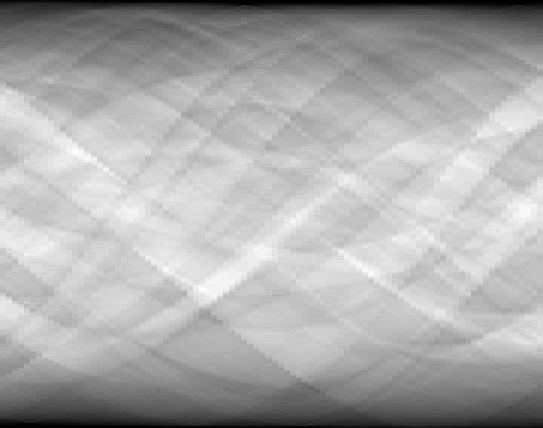}}\,
\subfloat[\scriptsize Noisy projection 
$c$.]{\label{fig:sinonoise_wav}\includegraphics[scale=0.22]{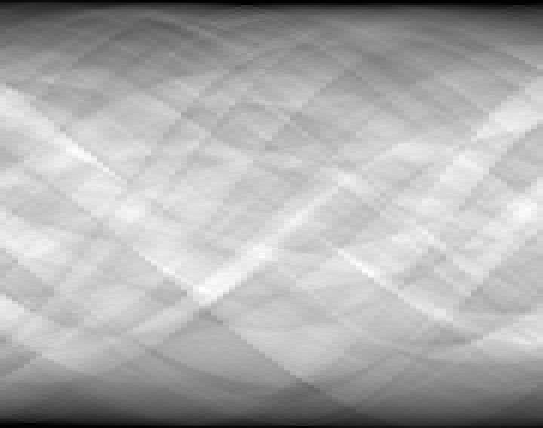}}
\caption{Clean projection and its noisy version, with 
	$\textnormal{SNR}_{\textnormal{input}}=42.18 $ dB} 
\label{fig:sinogram_wav}
\end{figure}

The image intensity range lies in $[0, x_{\max}]$, with $x_{\max} = 
900$. The Gaussian noise level is set to $\sigma=200$. The input 
signal-to-noise-ratio (SNR) in decibels (dB), between the clean 
projection $L \bar{x}$ and $c$ (both displayed on Figure 
\ref{fig:sinogram_wav}) is defined as
\begin{equation}
\text{SNR}_{\text{input}} = 20 \log_{10} \left(\frac{\|L 
	\bar{x}\|}{\|L \bar{x} - c \|}   \right).
	\end{equation}
	Problem \eqref{eq:problem0o} is solved 
	using an  orthonormal Symmlet basis with  4 vanishing moments, and 2 
	resolution levels for $W$ operator. The following penalty parameter 
	values are chosen: $\lambda =150$, $\delta = 5$, and $\alpha =0.1$. 
	The reconstructed images using MMFBHF and MMFDRF with $10^4$ 
	iterations are presented in Figure~\ref{fig:reconstructed_wav}. We 
	also present the results within a zoomed region-of-interest (ROI), 
	with size $80 \times 80$ pixels and circular shape, in Figure 
	\ref{fig:reconstructed_wav} (bottom). We evaluate, for each 
	algorithm, the quantitative error between the original image 
	$\bar{x}$ and its recovered version $\hat{x}$, through the normalized 
	mean squared error NMSE $= {\|\bar{x} - \hat{x} 
\|^2}/{\|\bar{x}\|^2}$, the mean absolute error MAE $= \| \bar{x} - 
\hat{x}\|_{\infty}$ and the SNR $=  10 \log_{10} \text{NMSE}$. 
Similar formula are used to determine SNR, MAE and NMSE scores inside 
the ROI. The obtained values are provided in the caption of Figure 
\ref{fig:reconstructed_wav}.

In Figure~\ref{fig:graphcomparison_wav}, we display the evolution of 
the SNR, along iterations and times, for codes running in MATLAB 
R2023a, on a laptop with AMD Ryzen 5 3550Hz, Radeon Vega Mobile Gfx, 
and 32 Gb RAM. One can notice that  MMFBHF and MMFDRF behave 
similarly in terms of convergence 
speed. Still, all algorithms reach convergence in about $2000$ 
iterations, and $200$ seconds, confirming the validity of our 
theoretical results.

\begin{figure}
\centering
\subfloat[\scriptsize Original image 
$\overline{x}$]{\label{fig:original_wav}\includegraphics[scale=0.25]{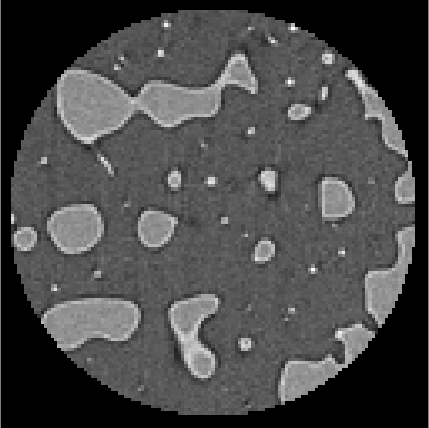}}\,
\subfloat[\scriptsize MMFBHF result, 
SNR=$23.29$ dB, NMSE = $4.71\times 10^{-3}$, MAE = $120.58 
$.]{\label{fig:FBHF_wav}\includegraphics[scale=0.25]{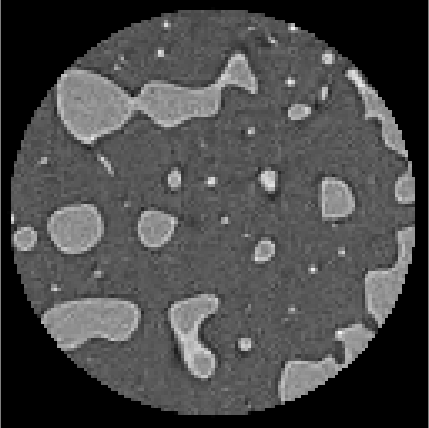}}\,
\subfloat[\scriptsize MMFDRF result, 
SNR=$23.20$ dB, NMSE = $4.72\times 10^{-3}$, MAE = 
$119.76$.]{\label{fig:FDRF_wav}\includegraphics[scale=0.25]{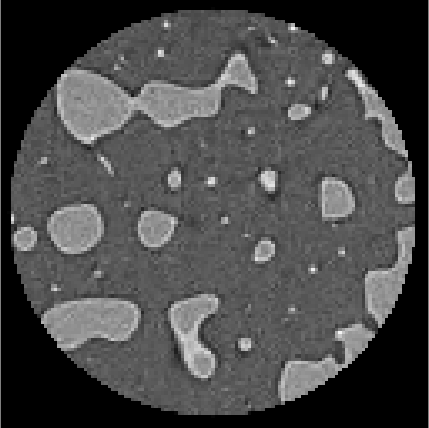}}\\
\subfloat[\scriptsize Zoomed original image 
$\overline{x}$]{\label{fig:originalzoom_wav}\includegraphics[scale=0.25]{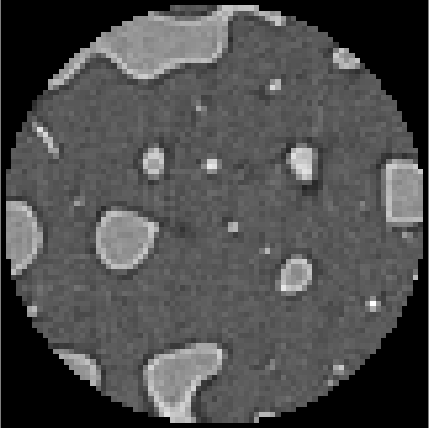}}\,
\subfloat[\scriptsize Zoomed MMFBHF result, 
SNR=$23.20$ dB, NMSE = $4.80\times 10^{-3}$, MAE = 
$120.58$.]{\label{fig:FBHF_zoom_wav}\includegraphics[scale=0.25]{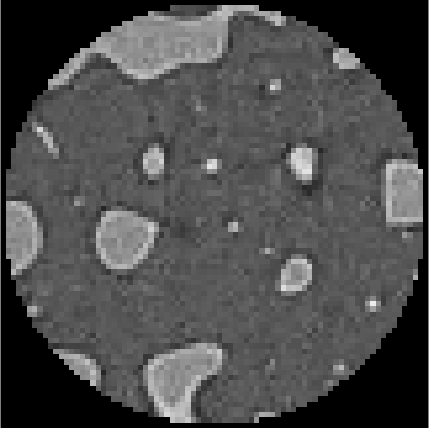}}\,
\subfloat[\scriptsize Zoomed MMFDRF result, 
SNR=$23.11$ dB, NMSE = $ 4.81\times 10^{-3}$, MAE = 
$116.90$.]{\label{fig:FDRF_zoom_wav}\includegraphics[scale=0.25]{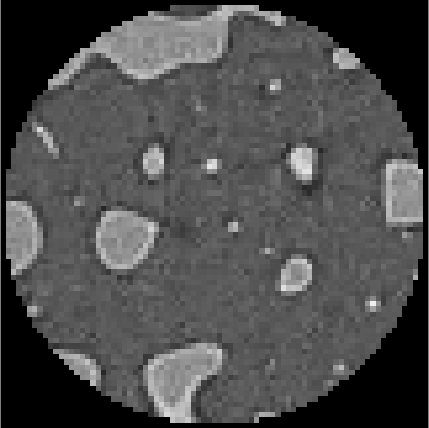}}
\caption{Original and reconstructed images (full view, and zoom) 
	after $10^5$ iterations of MMFBHF and MMFDRF algorithms, 
	respectively.}	\label{fig:reconstructed_wav}
	\end{figure}
	
	\begin{figure}
\centering
\begin{tabular}{@{}c@{}c@{}}
	\includegraphics[scale=0.43]{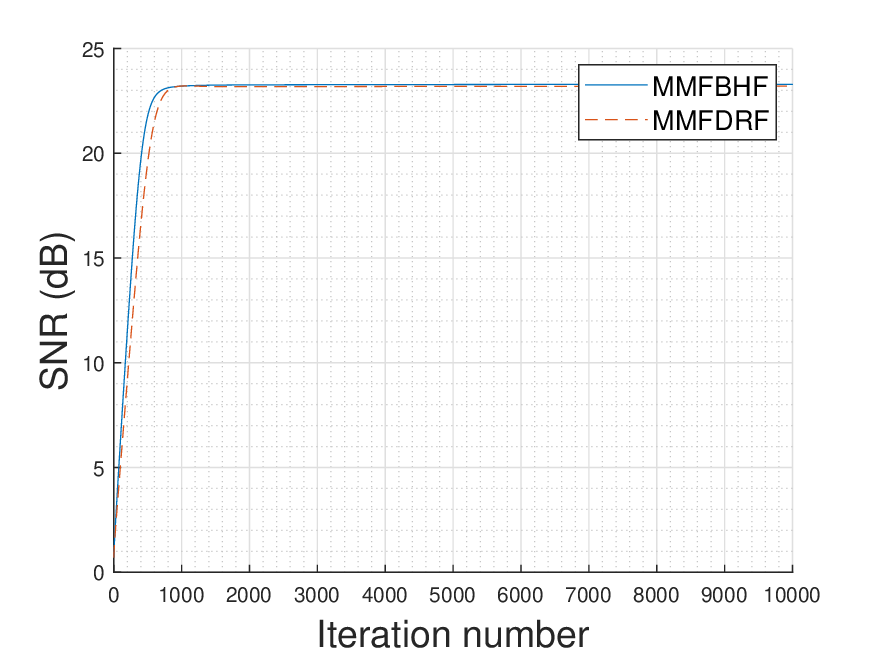} & 
	\includegraphics[scale=0.43]{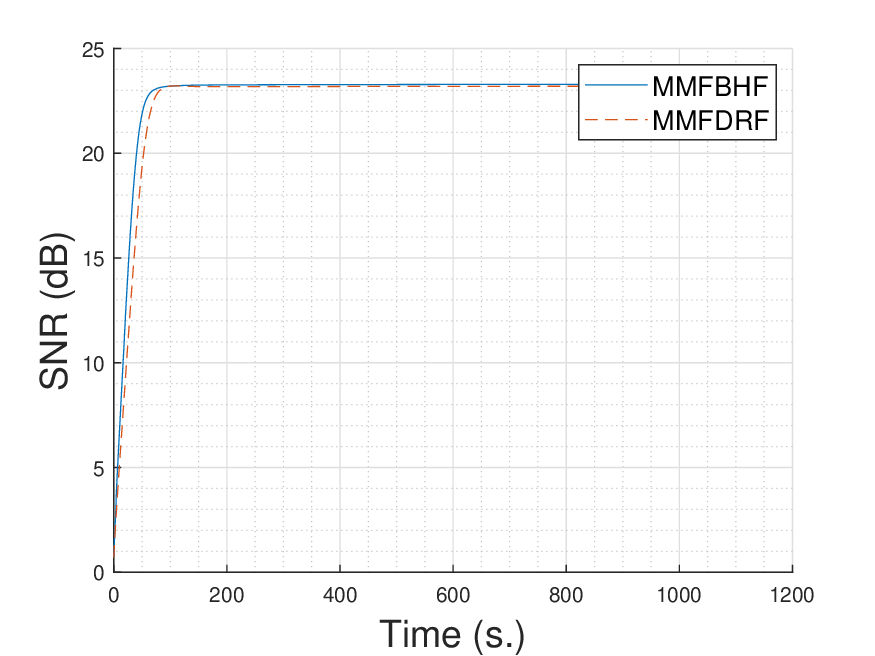}
\end{tabular} 
\caption{Evolution along iterations (left) and computational time 
	(right) in seconds, of the SNR (in dB) between the true image and its 
	reconstruction, for MMFBHF and MMFDRF 
	algorithms.}\label{fig:graphcomparison_wav}
	\end{figure}

\section{Conclusion}\label{se:conclu}
In this paper, we introduced two iterative algorithms for 
numerically solving monotone inclusions involving the sum of a 
maximally $\rho$-monotone operator, a cocoercive operator, and a 
mismatched Lipschitzian operator. The proposed schemes can be viewed 
as extensions of {\it Forward-Backward-Half-Forward} and {\it 
Forward-Douglas-Rachford-Forward} splitting methods, that use an 
approximation to an adjoint operator at each iteration. We provided 
conditions under which the sequence generated by these variants 
weakly converges to a solution to the mismatched inclusion. We also 
showed that, under some strong monotonicity assumptions, a linear 
convergence rate is obtained for the two algorithms. 
The applicability of our study is illustrated by numerical 
experiments in the context of imaging of materials.
When considering variational problems, the main advantage of
our work with respect to 
\cite{ChouzenouxMismatch2023,ChouzenouxMismatch2021,DongHansen2019,ElfvingHansen2018}
is to allow dealing with mismatches on more sophisticated functions 
than quadratic ones.

\section*{Acknowledgements}
E.C. acknowledges support from the European Research Council Starting 
Grant MAJORIS ERC-2019-STG-850925. The work by
J.-C.P. was supported by the ANR Research and Teaching Chair BRIDGEABLE in AI.

\end{document}